\numberwithin{equation}{section}
\title[An Explicit Tauberian Theorem]{An Explicit Tauberian Theorem taking Averaged Inputs with an Application to Counting Abelian Number Fields}
\author{Brandon Alberts}
\begin{document}

\begin{abstract}
    Given a Dirichlet series $L(s) = \sum a_n n^{-s}$, the asymptotic growth rate of $\sum_{n\le X} a_n$ can be determined by a Tauberian theorem. Bounds on the error term are typically controlled by the size of $|L(\sigma+it)|$ for fixed real part $\sigma$. We modify this approach to prove new Tauberian theorems with error terms depending only on the average size of $L(\sigma+it)$ as $t$ varies, and we take care to track explicit dependence on various parameters. This often leads to stronger error bounds, and introduces strong connections between asymptotic counting problems and moments of $L$-functions.

    We provide self-contained statements of Tauberian theorems in anticipation that these results can be used ``out of the box'' to prove new asymptotic expansions. We demonstrate this by proving square root saving error bounds for the number of $C_n$-extensions of $\Q$ of bounded discriminant when $n=3$, $4$, $8$, $16$, or $2p$ for $p$ an odd prime.
\end{abstract}

\maketitle

\tableofcontents

\newpage

\section{Introduction}

Given a function $N:\R_{\ge 0}\to \C$, we denote the corresponding Mellin transform by
\[
    L(s,N) := s\int_1^\infty N(x) x^{-s-1}dx.
\]
This generalizes the construction of a generating Dirichlet series: if $N(X) = \sum_{n\le X} a_n$ it follows that $L(s,N) = \sum a_n n^{-s}$ as long as the sum converges absolutely.

The goal of this paper is to prove a Tauberian theorem that uses slightly more stringent analytic conditions in order to give a better error bound. Tauberian theorems typically utilize a vertical bound of the form $|L(s,N)| \ll_{\sigma} (1+|t|)^{\xi}$ in order to control the size of the error term. We show that all references to pointwise bounds can be removed from such Tauberian theorems, in exchange for bounds on twisted moments of the form
\[
    \int_T^{2T} L(\sigma+it,N)Z^{it}dt.
\]
It is typically easier to bound such moments than it is to give pointwise bounds, often leading to stronger error bounds.

The following is our main theorem with inexplicit constants. The explicit dependence of the implied constants on various parameters is given by Theorem \ref{thm:main}.

\begin{theorem}\label{thm:inexplicit_main}
    Let $N,\widehat{N}:\R_{\ge 0}\to \C$ be functions for which $\widehat{N}$ is nondecreasing and $|N(X)| \le \widehat{N}(X)$. Suppose that each of $F = N,\widehat{N}$ satisfy
    \begin{enumerate}[(a)]
        \item $L(s,F)$ converges absolutely on the region ${\rm Re}(s) > \sigma_a$,
        \item $L(s,F)$ has a meromorphic continuation\footnote{We take the standard convention that $f(s)$ being meromorphic on a set $S$ means that $f(s)$ is meromorphic on some open neighborhood of $S$.} to ${\rm Re}(s) \ge \sigma_a-\delta$ with at most finitely many poles in this region, and
        \item For each $\sigma\ge \sigma_a - \delta$, each $T\ge e$ for which $L(s,F)/s$ does not have a pole on the vertical line $[\sigma+iT,\sigma+2iT]$, and each $Z\ge e/2$
        \begin{align}\label{eq:dyadyic_integral_bound}
            \left|\int_T^{2T} L(\sigma+it,F)Z^{it}dt\right| \ll_{F,\sigma} \begin{cases}
                 T^{\widetilde{\eta}} & \sigma > \sigma_a - \delta\\
                 T^{\eta}(\log T)^{\beta} & \sigma = \sigma_a - \delta
            \end{cases}
        \end{align}
        for some constants $\widetilde{\eta} > 0$, $\eta > 0$, and $\beta \ge 0$.
    \end{enumerate}
    Then for each $X \ge e$
    \[
        N(X) = \sum_{{\rm Re}(z) \ge \sigma_a - \delta} \underset{s=z}{\rm Res}\left(L(s,N)\frac{X^s}{s}\right) + O\left(X^{\sigma_a-\frac{\delta}{\max\{\eta,1\}}} (\log X)^{\theta}\right),
    \]
    where
    \[
        \theta = \begin{cases}
            0 & \eta < 1\\
            \beta+1 & \eta = 1\\
            (b-1)(1-1/\eta) + \beta/\eta & \eta > 1
        \end{cases}
    \]
    and $b$ is the order to which $s=\sigma_a$ is a pole of $L(s,\widehat{N})$.
\end{theorem}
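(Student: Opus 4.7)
The plan is to apply a quantitative (smoothed) Perron formula, shift the contour past the pole region, and control the resulting vertical integral by dyadically decomposing in $t$ and applying the twisted moment hypothesis (c) on each dyadic piece. Starting with $c = \sigma_a + 1/\log X$ and a truncation height $T_0$ (to be optimized), I would write
\[
N(X) = \frac{1}{2\pi i}\int_{(c)} L(s, N)\,\Phi_{T_0}(s)\, X^s\,ds + E_1(X, T_0),
\]
where $\Phi_{T_0}$ is a smooth Mellin kernel approximating $1/s$ with rapid vertical decay above height $T_0$, and the smoothing error $E_1$ is absorbed into the local increment $\widehat{N}(X+H) - \widehat{N}(X-H)$ with $H \asymp X/T_0$. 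Since $\widehat{N}$ satisfies the same hypotheses and $s = \sigma_a$ is a pole of order $b$ of $L(s, \widehat{N})$, a preliminary (weaker) application of the theorem to $\widehat{N}$ gives $\widehat{N}(X+H) - \widehat{N}(X-H) \ll H X^{\sigma_a - 1}(\log X)^{b-1}$.

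Next, shift the contour from $\mathrm{Re}(s) = c$ to $\mathrm{Re}(s) = \sigma_a - \delta$, collecting the finitely many residues in the strip to produce the main term. Because $\Phi_{T_0}$ decays rapidly above height $T_0$, no horizontal boundary contributions survive.

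The central estimate is the new vertical integral at $\mathrm{Re}(s) = \sigma_a - \delta$. Split $|t| \in [1, T_0]$ into dyadic intervals $[T_k, 2T_k]$; on each, the factor $\Phi_{T_0}(\sigma_a - \delta + it)$ is slowly varying of size $\asymp 1/T_k$, and integration by parts reduces the estimate to the twisted moment $\int_{T_k}^{2T_k} L(\sigma_a - \delta + it, N)\, X^{it}\,dt$, controlled by (c) with $Z = X$. This yields a dyadic bound of order $T_k^{\eta - 1}(\log T_k)^{\beta}$. Summing over $k \le \log_2 T_0$ produces the three regimes in the definition of $\theta$: geometric convergence to a constant if $\eta < 1$, a log sum of size $(\log T_0)^{\beta+1}$ if $\eta = 1$, and domination by the top term $T_0^{\eta - 1}(\log T_0)^{\beta}$ if $\eta > 1$. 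Multiplying by $X^{\sigma_a - \delta}$ gives the vertical contribution.

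Finally, balance the smoothing error $HX^{\sigma_a - 1}(\log X)^{b-1}$ against $X^{\sigma_a - \delta}$ times the dyadic sum by choosing $H$ (equivalently $T_0 = X/H$). In the case $\eta > 1$ this yields $H = X^{1 - \delta/\eta}(\log X)^{(\beta - b + 1)/\eta}$ and the stated final error $X^{\sigma_a - \delta/\eta}(\log X)^{(b-1)(1 - 1/\eta) + \beta/\eta}$; the $\eta \le 1$ cases admit simpler optimizations. The main obstacle I anticipate is the bootstrap: controlling the smoothing error for $N$ requires an asymptotic for $\widehat{N}$, which must itself come from running the same argument on $\widehat{N}$ first. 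A secondary technical point is that (c) is a cancellation-type bound rather than an $L^1$-in-$t$ bound, so the integration-by-parts step must be carried out carefully, exploiting the freedom in the parameter $Z \ge e/2$ so that only moments already controlled by (c) appear at each dyadic scale.
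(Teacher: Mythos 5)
Your overall architecture — smooth the Perron integral so the contour shift is justified without pointwise bounds, integrate by parts against the twisted moment on the shifted line, dyadically decompose in $t$, and balance the smoothing scale $H$ against the vertical contribution — is the same skeleton as the paper's. The paper uses iterated antiderivatives $A_N^k$ (Riesz means) and the finite-difference operator $\Delta_y^{(k)}$ where you use a smooth Mellin kernel $\Phi_{T_0}$, but these are interchangeable forms of smoothing. Your use of rapid vertical decay of $\Phi_{T_0}$ to kill the horizontal boundary contributions is actually a cleaner way to reach the same endpoint as the paper's auxiliary average $\frac{1}{T}\int_T^{2T}(\cdot)\,du$ in Lemma \ref{lem:contour_shift}, whose only purpose is to justify the contour shift without pointwise bounds. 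The dyadic decomposition and the choice $Z = X$ in \eqref{eq:dyadyic_integral_bound} are both correct, and the optimization of $T_0$ reproduces the stated exponent $\theta$.

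The genuine gap is the bootstrap you flag at the end: to absorb the smoothing error you need $\widehat{N}(X+H) - \widehat{N}(X-H) \ll H X^{\sigma_a-1}(\log X)^{b-1}$, and you propose to obtain this by ``a preliminary (weaker) application of the theorem to $\widehat{N}$.'' But a weaker version of the theorem gives an error term \emph{larger} than $X^{\sigma_a-\delta/\max\{\eta,1\}}(\log X)^\theta$, and this error term does not cancel when you take the difference $\widehat{N}(X+H) - \widehat{N}(X-H)$; for the difference to be $\ll H X^{\sigma_a-1}(\log X)^{b-1}$ you need the error bound for $\widehat{N}$ to already be of the strength you are trying to prove. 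The circle does not close. The paper avoids this entirely: for a nondecreasing function the sandwich
\[
A_N^0(X) \;\le\; y^{-k}\Delta_y^{(k)}\bigl[A_N^k(X)\bigr] \;\le\; A_N^0(X+ky)
\]
holds purely by monotonicity, with no asymptotic input, and the increment of the \emph{main term} $S^0_{N,\sigma_a-\delta}(X+ky) - S^0_{N,\sigma_a-\delta}(X)$ is bounded directly from the polar data (the explicit polynomial $R_{N,\sigma_a-\delta}$) by the mean value theorem, yielding $\ll ky\, x^{-1} R_{N,\sigma_a-\delta}(x)$. That is, the quantity $X^{\sigma_a-1}(\log X)^{b-1}$ enters as the derivative of the residue sum, not as the increment of $\widehat{N}$ itself, so no a priori asymptotic for $\widehat{N}$ is needed. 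Replacing your bootstrap with this residue-differentiation argument would make the proposal sound.
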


To our knowledge, \Cref{thm:inexplicit_main} and the explicit version \Cref{thm:main} are the first general statements of this form that do not require pointwise bounds as input at any stage of the argument. Our proof will largely follow Landau's method of finite differencing \cite{landau1915} as done in \cite{roux2011,alberts2024power}, with some similarities to work in \cite{pierce-turnage-butterbaugh-zaman2025guide}, using \eqref{eq:dyadyic_integral_bound} in leu of pointwise bounds or functional equations.

We particularly highlight the power saving in this error bound and compare this with results coming from pointwise vertical bounds: \cite[Theorem 6.1]{alberts2024power} and \cite[Theorem B]{pierce-turnage-butterbaugh-zaman2025guide} show that if
\[
    L(\sigma + it,F) \ll (1+|t|)^{\xi+\epsilon},
\]
then one can prove an error bound of the form \[
    O\left(X^{\sigma_a - \frac{\delta}{\xi+1} + \epsilon}\right).
\]
In fact, we can now see this as a consequence of Theorem \ref{thm:main} by taking $\widetilde{\eta} = \eta = \xi+1+\epsilon$ and $\beta=0$. We generally expect the average to be smaller and easier to control, and indeed one can often show that $\eta$ may be taken smaller than $\xi+1+\epsilon$ in applications. Furthermore, the twist by $Z^{it}$ suggests cancellation in the integral that may cause it to be even smaller.

\begin{remark}
The actual value of $\widetilde{\eta}$ does not play a role in the argument, we only require some polynomial bound for the growth of \eqref{eq:dyadyic_integral_bound} for all $\sigma \ge \sigma_a - \delta$. The size of the error term is then completely determined by the growth of \eqref{eq:dyadyic_integral_bound} on the leftmost edge $\sigma = \sigma_a-\delta$. Although pointwise bounds are not necessary for the argument, in case pointwise bounds of the form $|L(\sigma+it,F)| \ll (1+|t|)^{\xi+\epsilon}$ do exist it suffices to take $\widetilde{\eta} = \xi+1+\epsilon$. This will be sufficient for many applications involving Dirichlet series, where $\eta$ can then be bounded by other means.
\end{remark}

We discuss two different use cases for \Cref{thm:inexplicit_main} and \Cref{thm:main}, showing how various existing tools in the literature can directly give the bounds in \eqref{eq:dyadyic_integral_bound}.

\subsection{Bounding via Integral Moments}
The trivial bound
\[
    \left|\int_T^{2T} L(\sigma+it,F)Z^{it}dt\right| \le \int_T^{2T} |L(\sigma+it,F)| dt
\]
allows one to use upper bounds for integral moments as ``out of the box'' ingredients for \Cref{thm:inexplicit_main}. See, for example, recent work of Languasco, Lunia, and Moree \cite{languasco-lunia-moree2025counting} improving error bounds for the number of ideals of bounded norm in abelian number fields using bounds for small integral moments of Dirichlet $L$-functions.

We bolster this with a few more examples from the study of number field counting: Let $G$ be an abelian group. Define
\[
    \mathcal{F}_{\Q}(G;X) := \{L/\Q : \Gal(L/\Q)\cong G,\ |\disc(L/\Q)|\le X\}.
\]
The asymptotic growth rate of this function was determined by M\"aki \cite{maki1985} to be
\[
    \#\mathcal{F}_{\Q}(G;X) \sim c(\Q,G) X^{1/a(G)}(\log X)^{b(\Q,G)-1},
\]
where $c(\Q,G) > 0$, $a(G) = |G|(1-\ell^{-1})$ for $\ell$ the smallest prime dividing $G$, and $b(\Q,G) = \frac{|G[\ell]|-1}{\phi(\ell)}$ where $\phi$ is Euler's totient function. This was generalized by Wright \cite{wright1989} to base fields other than $\Q$. Subsequent asymptotic expansions, sometimes including lower order terms, have since been given by \cite{frei-loughran-newton2018,alberts2024power}.

The generating Dirichlet series for such extensions has been expressed as products of Dedekind zeta functions (times an absolutely convergent Euler product) in \cite{maki1985,alberts2024power}. Upper bounds for the integral moments of Dedekind zeta functions follow directly from work of Chandrasekharan and Narasimhan \cite{chandrasekharan1963approximate} on approximate functional equations. Using these results as inputs in \Cref{thm:inexplicit_main}, we can prove several asymptotic expansions with unconditional square root saving error bounds.

\begin{corollary}\label{cor:uncond_square_root_savings}
    The following asymptotic expansions hold unconditionally
    \begin{align*}
        \#\mathcal{F}_{\Q}(C_3;X) =& c_2(C_{3})X^{1/2} +O\left(X^{1/4}(\log X)^2\right),\\
        \#\mathcal{F}_{\Q}(C_4;X) =& c_2(C_{4})X^{1/2} + c_3(C_4)X^{1/3} + O\left(X^{1/4}(\log X)^4\right),\\
        \#\mathcal{F}_{\Q}(C_6;X) =& c_3(C_6)X^{1/3} + c_4(C_6)X^{1/4} + c_5(C_6)X^{1/5} + O\left(X^{1/6}(\log X)^5\right),\\
        \#\mathcal{F}_{\Q}(C_8;X) =& c_4(C_{8})X^{1/4} + c_6(C_8)X^{1/6} + c_7(C_8)X^{1/7} + O\left(X^{1/8}(\log X)^8\right),\\
        \#\mathcal{F}_{\Q}(C_{16};X) =& c_8(C_{16})X^{1/8} + c_{12}(C_{16})X^{1/12} + c_{14}(C_{16})X^{1/14} + c_{15}(C_{16})X^{1/15}\\
        &+ O\left(X^{1/16}(\log X)^{17}\right),
    \end{align*}
    and for each odd prime $p\ge 5$
    \begin{align*}
        \#\mathcal{F}_{\Q}(C_{2p};X) &= c_p(C_{2p})X^{\frac{1}{p}} + c_{2p-2}(C_{2p})X^{\frac{1}{2p-2}} + c_{2p-1}(C_{2p})X^{\frac{1}{2p-1}} + O_{p}\left(X^{\frac{1}{2p}}(\log X)^{3}\right).
    \end{align*}
    The constants $c_d(C_n)$ are given explicitly by finite sums of convergent Euler products times special values of Dedekind zeta functions for cyclotomic fields.

    The nonvanishing of $c_d(C_n)$ is equivalent to the nonvanishing of certain Dedekind zeta functions at certain rational values in the interval $(1/2,1]$, see \Cref{subsec:leading_constants} for details.
\end{corollary}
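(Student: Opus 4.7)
The plan is to apply \Cref{thm:inexplicit_main} directly to $N(X) = \#\mathcal{F}_{\Q}(C_n;X)$ with $\widehat{N} = N$ (valid since $N$ is nondecreasing and non-negative), choosing $\sigma_a = 1/a(C_n)$ and $\delta = 1/(2a(C_n))$ so that $\sigma_a - \delta = 1/(2a(C_n))$ matches the desired square-root-saving error exponent. The residue sum in \Cref{thm:inexplicit_main} then collects precisely the main terms $c_d(C_n)X^{1/d}$ for divisors $d$ with $a(C_n) \le d < 2a(C_n)$ at which $L(s,N)$ has a pole, with the residues identified via \Cref{subsec:leading_constants} as the advertised constants.

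The analytic inputs (a) and (b) come from the M\"aki--Alberts factorization \cite{maki1985,alberts2024power}: the Dirichlet series
\[
L(s,N) \;=\; \sum_{L\in\mathcal{F}_{\Q}(C_n)} |\disc L|^{-s}
\]
can be written as a finite sum of products of Dedekind zeta functions of cyclotomic subfields of $\Q(\zeta_n)$ evaluated at linear arguments $a_i s - b_i$, times an absolutely convergent Euler product extending into some half-plane strictly containing $\Re s \ge 1/(2a(C_n))$. This establishes meromorphic continuation and places all remaining poles on $\Re s = 1/k$ for divisors $k$ inherited from the simple poles of each $\zeta_{K_i}$ at $1$.

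For condition (c), I would apply the triangle inequality followed by H\"older's inequality to the factored expression to reduce to bounds of the form
\[
\int_T^{2T} |\zeta_{K_i}(a_i \sigma - b_i + i a_i t)|^{k_i}\,dt \;\ll\; T^{\eta_i}(\log T)^{\beta_i}
\]
for each Dedekind zeta factor. The cyclotomic fields appearing all have manageable degree ($[\Q(\zeta_3):\Q] = [\Q(\zeta_4):\Q] = 2$, $[\Q(\zeta_8):\Q] = 4$, $[\Q(\zeta_{16}):\Q] = 8$, and $[\Q(\zeta_{2p}):\Q] = p-1$), and the requisite mean-value estimates follow from the approximate functional equation of Chandrasekharan--Narasimhan \cite{chandrasekharan1963approximate}. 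On the edge $\sigma = 1/(2a(C_n))$ the linear arguments $a_i \sigma - b_i$ land on or to the right of the critical line for each factor, so the second-moment bounds $\int_T^{2T}|\zeta_{K_i}(1/2 + it)|^2 \ll T(\log T)^{[K_i:\Q]}$ suffice after H\"older, giving $\eta$ that one can take at most $1$ together with an explicit $\beta$. For $\widetilde{\eta}$ any standard convexity estimate for Dedekind zetas in the interior of the strip suffices.

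The main obstacle is entirely bookkeeping: for each $n \in \{3,4,6,8,16,2p\}$ one must identify the precise list of cyclotomic factors $\zeta_{K_i}(a_i s - b_i)$ in the factorization, enumerate the intermediate poles in $(1/(2a(C_n)), 1/a(C_n)]$ to determine which $X^{1/d}$ terms appear in the main expansion, compute the pole order $b$ of $L(s,\widehat{N})$ at $s = \sigma_a$, and verify that plugging the resulting $\eta$, $\beta$, $b$ into the formula for $\theta$ in \Cref{thm:inexplicit_main} reproduces the stated log powers $2, 4, 5, 8, 17, 3$. The nonvanishing assertions for the $c_d(C_n)$ then follow by inspecting the explicit residue formulas of \Cref{subsec:leading_constants} and noting that each residue is a product of an absolutely convergent Euler product with Dedekind zeta values at rational points in $(1/2,1]$.
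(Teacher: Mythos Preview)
Your outline captures the overall strategy, but there is a genuine technical gap in how you propose to verify condition (c). The factorization of the generating series (whether for \'etale algebras or for fields) necessarily involves \emph{negative} powers of Dedekind zeta functions: for instance, \Cref{prop:abelian_meromorphic_continuation} produces a factor $\zeta(2a(G)s)^{-1}$ (or more complicated negative powers of $\zeta_{\Q(\zeta_\ell)}$), and at the edge $\sigma = 1/(2a(G))$ these sit at $\Re(s)=1$. The Chandrasekharan--Narasimhan mean-value theorem you cite gives bounds for $\int_T^{2T}|\zeta_K(\sigma+it)|^{2m}\,dt$ with $m\ge 0$; it says nothing about negative moments. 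The paper handles this with two additional inputs: a result of Balasubramanian--Ivi\'c--Ramachandra bounding $I_z(\sigma,\zeta_K;T)\ll T$ for arbitrary complex $z$ when $\sigma\ge 1$, and, for $C_{16}$ and $C_{2p}$ with $p\ge 5$, the pointwise bound $|\zeta(\sigma+it)|^{-1}\ll\log|t|$ near $\sigma=1$ (because in those cases H\"older's inequality with the available moment bounds does not quite close). Without these, your H\"older step does not go through.

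There is also a structural difference worth noting: the paper does \emph{not} apply \Cref{thm:inexplicit_main} to $N(X)=\#\mathcal{F}_\Q(C_n;X)$ directly. Instead it applies the Tauberian theorem to the \'etale count, whose Dirichlet series has the clean Euler-product factorization, and then passes to the field count via the sieve identity $\#\mathcal{F}_\Q^{\text{\'et}}(C_n;X) = \sum_{H\le C_n}\#\mathcal{F}_\Q(C_n/H;X^{|H|/n})$, invoking the already-proved square-root savings for the smaller cyclic groups appearing on the right. Your proposal to work with the field series directly would require expressing it as a finite linear combination of \'etale series and tracking moments termwise; this is workable but is not quite what you wrote, and it still requires the negative-moment input above.
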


A square root power saving bound of the form $O(X^{1/2a(G)+\epsilon})$ is known for all abelian groups under GRH, see \cite[Corollary 1.6]{alberts2024power}. Unconditionally, this quality of error term was previously known only for $G=C_2$, by classical counting results for squarefree numbers, while the case $G=C_3$ can be reasonably inferred from results for counting ideals of bounded norm in $\Q(\sqrt{-3})$ with square root savings (one consequence of Landau's original work \cite{landau1915}). Corollary \ref{cor:uncond_square_root_savings} is a significant improvement in this direction as it proves square root savings for infinitely many abelian groups. In the language of \cite[Definition 1.3]{alberts2024power}, \Cref{cor:uncond_square_root_savings} proves that $\theta(\Q,C_n) \le 1/2a(C_n)$ for $n=3,4,6,8,16,2p$.

More generally, determining the integral moments of $L$-functions is itself a well-studied and interesting problem. We refer the reader to some references for a more comprehensive overview of this field \cite{conrey2001functions,soundararajan2022distribution}. Many of these conjectures and results are of a similar shape, for example the conjectural moments of the Riemann zeta function:
\[
    \int_T^{2T} |\zeta(1/2+it)|^{2m} dt \sim c_m T (\log T)^{m^2}
\]
for each $m \ge 0$, where $c_m$ is some constant (given explicitly by a conjecture of Keating and Snaith \cite{keating-snaith2000random}) . In the context of Theorem \ref{thm:inexplicit_main} this corresponds to $\sigma_a = 1$, $\delta = 1/2$, $\eta = 1$, and $\beta = m^2$, which is enough to produce square root savings.

The Lindel\"of hypothesis is equivalent to an upper bound $\ll_{m,\epsilon} T^{1+\epsilon}$ for these moments for all $m\ge 0$, although bounds for a \emph{particular} value of $m$ are significantly weaker. In the case of the Riemann zeta function sharp upper bounds are known for $0 \le m\le 2$ \cite{heap-radziwill-sound2019sharp}, while they are known for all $m\ge 0$ under RH \cite{harper2013sharp}. This makes Theorem \ref{thm:main} a promising tool for improving error bounds, especially for ``low degree'' functions $L(s,N)$ where we have a chance of unconditionally bounding the corresponding moments.

\subsection{Bounding via (Approximate) Functional Equations}

Suggested by the use of the general approximate functional equation of \cite{chandrasekharan1963approximate} above, \eqref{eq:dyadyic_integral_bound} can also be bounded directly in terms of an approximate functional equation by simply integrating the approximate functional equation itself.

In a similar vein, a functional equation can be used to bound \eqref{eq:dyadyic_integral_bound} when $\sigma < 0$ (or otherwise on the ``other side'' of the functional equation). This process is similar to that used by Landau \cite{landau1915} and following work \cite{chandrasekharan1962functional,lowry-duda-taniguchi-thorne2022uniform} to bound the error term by taking advantage of a functional equation with a particular shape. Bounding \eqref{eq:dyadyic_integral_bound} for $\sigma < 0$ is more-or-less equivalent to bounding $W_k(X)$ in \cite[Equation (18)]{lowry-duda-taniguchi-thorne2022uniform}.

In a sense, our results can be understood as explicitly separating Landau's method into a part that does not require a functional equation, \Cref{thm:inexplicit_main}, and a part that may be bounded using a function equation as in Landau's original approach, namely equation \eqref{eq:dyadyic_integral_bound}. One benefit of this separation is the possibility of improved error terms by bounding \eqref{eq:dyadyic_integral_bound} through other means. For counting ideals in abelian number fields, the work of \cite{languasco-lunia-moree2025counting} produces smaller error bounds than Landau's original method \cite{landau1915,chandrasekharan1962functional,lowry-duda-taniguchi-thorne2022uniform} by appealing to integral moments of Dirichlet $L$-functions instead of the functional equation.

Another benefit of this separation is that one can now modify Landau's use of the functional equation to bound \eqref{eq:dyadyic_integral_bound} using functional equations of new shapes without having to rework through the rest of the finite differencing method.

\section*{Acknowledgements}

The author thanks Alina Bucur, Kevin McGown, and Amanda Tucker for initial conversations towards the results of this paper.

\section{An Explicit Tauberian Theorem}

We state our main results in this section. We model our notation on that of \cite{chandrasekharan1962functional,lowry-duda-taniguchi-thorne2022uniform}, which allows us both to directly indicate where our approach differs and to more easily take advantage of some basic lemmas. We note that, in some respects, our method has more in common with other works, such as \cite{roux2011,alberts2024power}, because we do not require a functional equation as input.

\subsection{Notation and Assumptions}\label{subsec:hypotheses}

We set the following notations and assumptions for a function $N:\R_{>0} \to \C$.
\begin{itemize}
    \item (Meromorphic continuation) $L(s,N)$ converges absolutely for ${\rm Re}(s) > \sigma_a$ and has a meromorphic continuation to the right half-plane ${\rm Re}(s) \ge \sigma_a - \delta$ with at most finitely many poles in this region.

    Let $T_0 \ge e$ be a number such that all poles in this region have their imaginary part contained in the interval $[-T_0,T_0]$.

    \item (Polar data) Define
    \[
        S_{N,\alpha}^0(X) := \frac{1}{2\pi i} \int_{C_{\alpha}} L(s,N)\frac{X^s}{s} ds = \sum_{{\rm Re}(z) \ge \alpha} \underset{s=z}{\rm Res}\left(L(s,N) \frac{X^s}{s}\right),
    \]
    where $C_{\alpha}$ is any contour contained in the region of meromorphicity which encircles all the poles of $L(s,N)/s$ satisfying ${\rm Re}(s) \ge \alpha$.

    By direct computation of the residue, this function has the form $S_{N,\alpha}^0(X) = \sum X^{z} R_z(\log X)$ for certain polynomials $R_z$. We further define
    \[
        R_{N,\alpha}(X) := \sum X^{{\rm Re}(z)}R_z^{\rm abs}(\log X),
    \]
    where $R_z^{\rm abs}$ is the polynomial obtained from $R_z$ by taking the absolute value of the coefficients.

    \item (Leading coefficient) Let $\rho_N$ be the leading coefficient of $S_{N,\sigma_a-\delta}^0(X)$; that is if $S_{N,\sigma_a-\epsilon}^0(X) \asymp X^{\sigma}(\log X)^{b-1}$ for some $\sigma,\beta$ as $X$ tends towards infinity then
    \[
        \rho_N := \lim_{X\to \infty} \frac{S_{N,\sigma_a-\epsilon}^0(X)}{X^{\sigma}(\log X)^{b-1}}.
    \]
    For convenience, if $S_{N,\sigma_a-\delta}^0(X) = 0$ then we taken $\rho_N = 1$ by convention.

    \item (Polynomial bounds for twisted moments) We ask that there exists $\widetilde{\eta} \in \R$ such that
    \[
        \int_{T_0}^T L(\sigma + it,N) Z^{it} dt \ll T^{\widetilde{\eta}}
    \]
    for each $\sigma_a - \delta \le \sigma \le \sigma_a + 1$, where the implied constant is independent of $T$ (but otherwise may depend on any other parameters). 
    
    \item (Explicit bound for the leftmost twisted moment) We also ask that
    \[
        \int_{T_0}^T L(\sigma_a-\delta + it,N) Z^{it} dt \le QT^{\eta}(\log T)^{\beta}
    \]
    for constants $Q > 0$ and $\eta,\beta\in \R$. One generally expects $\eta \ge \widetilde{\eta}$ to follow from the Phragm\'en-Lindel\"of principle, although this is not required.
\end{itemize}

We utilize the standard notation for contour integrals along the vertical line ${\rm Re}(s) = \alpha$,
\[
    \int_{(\alpha)} := \lim_{T\to \infty} \int_{\alpha-iT}^{\alpha+iT}.
\]

\subsection{The main theorem with explicit constants}

The following is our main theorem. We leave one floating parameter $T$ that can be used to optimize the statement. \Cref{thm:inexplicit_main} is proven by choosing $T$ to optimize the $X$ dependence, but we leave $T$ unspecified in the general statement.

\begin{theorem}\label{thm:main}
    Let $N:\R\to\R$ be a nondecreasing integrable function satisfying the assumptions in \Cref{subsec:hypotheses}. Then for each $X \ge 1$ and $T \ge 6$ the bound
    \[
        \left\lvert N(X) - S_{N,\sigma_a - \delta}^0(X)\right\rvert \ll X^{\sigma_a - \frac{\delta}{\max\{\eta,1\}}}E_{N,1}(X,T) + 2^{\delta}E_{N,2}X^{\sigma_a-\delta}
    \]
    holds, where the implied constant depends only on $\sigma_a$ and on the locations and orders of the poles of $L(s,N)/s$ in the region ${\rm Re}(s) \ge \sigma_a-\delta$.

    The parameters are defined as follows:
    \begin{enumerate}[(a)]        
        \item $\gamma$ is any contour from $\sigma_a-\delta-iT_0$ to $\sigma_a-\delta+iT_0$ which is within the region of meromorphicity, within the half-plane ${\rm Re}(s) \le \sigma_a-\delta$, for which $L(s,N)/s$ has no poles on $\gamma$, and for which all poles of $L(s,N)/s)$ between $\gamma$ and the line ${\rm Re}(s) = \sigma_a-\delta$ actually lie on the line ${\rm Re}(s) = \sigma_a-\delta$, and
        
        \item the parameter $E_1(X)$ is given explicitly by:
        \begin{align*}
        E_{N,1}(X,T) := \begin{cases}
            0 & \eta < 1\\
            \displaystyle \left(\frac{X^\delta}{T}\right)\frac{R_{N,\sigma_a-\delta}(X)}{X^{\sigma_a}} + \left(\frac{k^k}{\log T} + 2^{\delta+1}Q\right)(\log T)^{\beta+1} & \eta = 1\\
            \displaystyle \left(\frac{X^{\delta/\eta}}{T}\right)\frac{R_{N,\sigma_a-\delta}(X)}{X^{\sigma_a}} + \left(k^k + \frac{2^{\delta}\eta}{\eta-1}Q\right)\left(\frac{T}{X^{\delta/\eta}}\right)^{\eta-1}\left(\log T\right)^{\beta}& \eta > 1,
        \end{cases}
        \end{align*}
        where $k = \lceil \max\{2,\widetilde{\eta}-2,3\eta-3\}\rceil$. 

        \item the parameter $E_2$ is given explicitly by:
        \begin{align*}
        E_{N,2} := \begin{cases}
            \sup_{s\in \gamma}\left\lvert \frac{L(s,N)}{s}\right\rvert & \eta < 1\\
            Q + \sup_{s\in \gamma}\left\lvert \frac{L(s,N)}{s}\right\rvert & \eta = 1,\\
            \frac{1}{\eta-1}e^{\eta-1}Q + \sup_{s\in \gamma}\left\lvert \frac{L(s,N)}{s}\right\rvert & \eta > 1.
        \end{cases}
        \end{align*}
    \end{enumerate}
    
    Moreover, if $N:\R\to \R$ is any function satisfies the assumptions in \Cref{subsec:hypotheses} and $\widehat{N}:\R\to \R$ is a nondecreasing function satisfying the same assumptions with $|N(X)|\le \widehat{N}(X)$, then for each $X \ge 1$ and $T \ge 6$ the bound
    \[
        \left\lvert N(X) - S_{N,\sigma_a - \delta}^0(X)\right\rvert \ll X^{\sigma_a - \frac{\delta}{\max\{\eta,1\}}}(E_{N,1}(X,T)+E_{\widehat{N},1}(X,T)) + 2^{\delta}(E_{N,2}+E_{\widehat{N},2})X^{\sigma_a-\delta}
    \]
    holds, where the implied constant depends only on $\sigma_a$ and on the locations and orders of the poles of $L(s,N)/s$ and $L(s,\widehat{N})/s$ in the region ${\rm Re}(s) \ge \sigma_a-\delta$.
\end{theorem}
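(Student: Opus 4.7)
The plan is to follow Landau's finite-differencing method \cite{landau1915} as refined in \cite{roux2011,alberts2024power}, but with the usual pointwise bounds on $L(s,N)$ replaced everywhere by the twisted moment input from assumption~(c). First, I would replace $N$ by the $k$-fold integral average $M_h^k N(X) := \frac{1}{h^k}\int_{[0,h]^k} N(X+u_1+\cdots+u_k)\,d\mathbf{u}$ with $k=\lceil\max\{2,\widetilde\eta-2,3\eta-3\}\rceil$. Mellin inversion then gives
\[
M_h^k N(X)=\frac{1}{2\pi i}\int_{(\sigma_0)}L(s,N)\cdot\frac{1}{h^k}\int_{[0,h]^k}\frac{(X+u_1+\cdots+u_k)^s}{s}\,d\mathbf{u}\,ds
\]
for $\sigma_0>\sigma_a$, with a kernel decaying like $|t|^{-(k+1)}$ on vertical strips. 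The choice of $k$ is calibrated so that three conditions are met simultaneously: enough decay for the horizontal contour pieces to vanish (governed by the polynomial exponent $\widetilde\eta$), enough decay for the dyadic sum of twisted moments on the leftmost line to converge (governed by $\eta$), and enough slack for the smoothing error to balance against the main bound under an appropriate choice of $h$ in terms of $T$ and $X$.

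Next, I shift the contour from ${\rm Re}(s)=\sigma_0$ to the path $\gamma$ together with the vertical line ${\rm Re}(s)=\sigma_a-\delta$ outside $|t|\le T_0$, picking up the polar part of the smoothed main term. The discrepancy between this polar part and $S_{N,\sigma_a-\delta}^0(X)$ is a smoothing correction of size $\ll h\cdot R_{N,\sigma_a-\delta}(X)/X$, which becomes the first summand of $E_{N,1}$ after the optimal choice of $h$. The horizontal pieces at $|{\rm Im}(s)|=T$ are made to vanish along a well-chosen subsequence $T_m\to\infty$ using (c) on nearby vertical lines together with the $|t|^{-(k+1)}$ kernel decay. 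The residual integral on ${\rm Re}(s)=\sigma_a-\delta$ is decomposed dyadically into blocks $|t|\in[2^jT_0,2^{j+1}T_0]$; by Fubini the $\mathbf{u}$-variables are pulled outside the $s$-integral, so the inner integral becomes exactly $\int L(\sigma_a-\delta+it,N)(X+u_1+\cdots+u_k)^{it}\,dt$, to which (c) applies with $Z = X+u_1+\cdots+u_k \ge e/2$ (valid because $X\ge 1$). Bounding $|s(s+1)\cdots(s+k)|^{-1}\ll |t|^{-(k+1)}$, summing up to the free cutoff $T$, and balancing against the smoothing error produces the three-case form of $E_{N,1}(X,T)$. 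The detour along $\gamma$ contributes $\sup_{s\in\gamma}|L(s,N)/s|$, which is absorbed into $E_{N,2}$.

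Finally, I would invert the smoothing via monotonicity: the sandwich $N(X)\le M_h^k N(X)\le N(X+kh)$ together with a standard finite-difference identity (as in \cite{roux2011,alberts2024power}) converts the bound on the smoothed difference into a bound on $N(X) - S_{N,\sigma_a-\delta}^0(X)$. For the general variant, both $\widehat{N}+N$ and $\widehat{N}-N$ are nondecreasing and real-valued (since $|N|\le\widehat{N}$), so applying the first statement to each and summing yields the bound with $E_{N,j}+E_{\widehat{N},j}$. The main obstacle is the contour shift in the absence of pointwise control on $L(s,N)$: the integral input of (c) forces both a large amount of smoothing (hence the particular value of $k$) and a subsequence argument to handle the horizontal contours. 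Beyond that, the remaining work is essentially bookkeeping---tracking explicit dependence on $k$, $\widetilde\eta$, $\eta$, and $\beta$ through every dyadic estimate, which is where the $k^k$ factors and the logarithmic transition at $\eta=1$ in the statement originate.
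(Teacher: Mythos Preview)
Your overall architecture---smooth by a $k$-fold average, shift the contour, bound the shifted pieces via the twisted moment hypothesis, then unsmooth by monotonicity---matches the paper's Landau-style argument, and your smoothed object $M_h^kN(X)$ is exactly the paper's $y^{-k}\Delta_y^{(k)}A_N^k(X)$. Two points deserve scrutiny, one a genuine gap in your plan and one an outright error.

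\textbf{The horizontal pieces.} You write that the horizontal segments ``are made to vanish along a well-chosen subsequence $T_m\to\infty$ using (c) on nearby vertical lines.'' But hypothesis (c) controls only the \emph{integral} $\int_T^{2T}L(\sigma+it,N)Z^{it}\,dt$; it gives no information about $L(\sigma+iT,N)$ at any fixed height $T$, so there is no evident pigeonhole producing a good subsequence. Even showing $\bigl|\int_T^{2T}H(u)\,du\bigr|\ll T^{\widetilde\eta-k-1}$ for the horizontal piece $H(u)$ does not yield a single $u$ with $H(u)$ small, since $H$ could oscillate. This is precisely the obstacle the paper isolates: it inserts a trivial auxiliary average $A_N^k(X)=\frac{1}{T}\int_T^{2T}A_N^k(X)\,du$ \emph{before} shifting, then shifts the contour in a $u$-dependent way so that the horizontal segments sit at height $u$. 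After Fubini the horizontal contribution becomes an honest $t$-average to which (c) applies directly, and it vanishes in the limit $T\to\infty$. Without this device (or an equivalent one) your contour shift is not justified.

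\textbf{The non-monotone case.} Your reduction ``both $\widehat N+N$ and $\widehat N-N$ are nondecreasing (since $|N|\le\widehat N$)'' is false: take $\widehat N(x)=x$ and $N(x)=\tfrac12\sin(x^2)$. The pointwise bound $|N|\le\widehat N$ says nothing about increments. The paper instead invokes \cite[Proposition~16.16]{roux2011} (equivalently \cite[Proposition~6.3]{alberts2024power}), which bounds the unsmoothing error $y^{-k}\Delta_y^{(k)}A_N^k(X)-N(X)$ by the corresponding quantity for $\widehat N$ directly, using only $|N|\le\widehat N$ and monotonicity of $\widehat N$.

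The remaining ingredients of your sketch (Fubini plus integration by parts on the inner vertical segments, kernel decay on the outer ones, the choice of $k$, the balancing of $h$ against $T$) line up with the paper's treatment in Sections~3--4, so once the two issues above are repaired the argument goes through essentially as you describe.
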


We note that $\gamma$ can be chosen to ``complete the vertical line'' if there are no poles with real part $\sigma_a - \delta$. While this is often the case, we anticipate some applications where allowing a pole on this line will be important.

One directly verifies that $E_1(X,X^{\delta/\max\{\eta,1\}})$ grows like a power of $\log X$, showcasing the optimum power of $X$ we are able to achieve. When proving \Cref{thm:inexplicit_main}, we will see how to choose $T$ to optimize power of $X$ and $\log X$. Optimizing $T$ with respect to the dependence of the error term on $Q$, $\delta$, $\eta$, $\widetilde{\eta}$ and the coefficients of $R_{N,\sigma_a}(X)$ is certainly possible, however we have found that attempting to do so at this level of generality results in very complicated expressions that are less than enlightening. We find that it may be more feasible to evaluate $R_{N,\sigma_a-\delta}(X)$ for a specific application first, then choose $T$ to optimize the dependence on all these parameters.

We demonstrate how to choose an optimum value for $T$ by proving \Cref{thm:inexplicit_main}.

\begin{proof}[Proof of \Cref{thm:inexplicit_main}]
    If $\eta < 1$, any choice for $T$ produces the error term
    \[
        \ll X^{\sigma_a-\delta}.
    \]
    The implied constant is precisely $2^\delta(E_{N,2}+E_{\widehat{N},2})$, but this is not necessary for the statement of \Cref{thm:inexplicit_main}. For the remaining cases, we let the implied constant absorb every parameter except for $X$ and $T$.

    If $\eta = 1$, the error term is bounded above by
    \[
        \ll T^{-1}X^{\sigma_a}(\log X)^{b-1} + X^{\sigma_a-\delta}(\log T)^{\beta+1} + X^{\sigma_a-\delta}.
    \]
    The first term is decreasing in $T$ while the second is increasing, so the optimum value is when the two terms are equal. Taking $T = X^{\delta}(\log X)^{b-1}$ makes the two terms have the same order of magnitude, concluding the proof.

    If $\eta > 1$, the error term is bounded above by
    \[
        \ll T^{-1}X^{\sigma_a}(\log X)^{b-1} + X^{\sigma_a-\delta}T^{\eta-1}(\log T)^{\beta} + X^{\sigma_a-\delta}.
    \]
    Similar to the $\eta = 1$ case, this is optimized when $T = X^{\delta/\eta}(\log X)^{(b-1-\beta)/\eta}$. Plugging this in and evaluating concludes the proof.
\end{proof}

\subsection{Summary of the proof of \Cref{thm:main}}
We will largely follow Landau's method of finite differencing. The process proceeds by smoothing $N(X)$ using finite difference operators, shifting the contour in Perron's formula (\Cref{sec:contour_shift}), then unsmoothing back to $N(X)$ (\Cref{sec:unsmoothing}). This is similar to Landau's original work \cite{landau1915} and the following works \cite{chandrasekharan1962functional,lowry-duda-taniguchi-thorne2022uniform,roux2011,alberts2024power}.

In previous examples of this method, pieces of the shifted contour are generally bounded using pointwise bounds for $|L(s,N)|$. Integration by parts can be used in some places to apply average bounds instead, but in order to avoid the need for pointwise bounds at any point in the argument at all we will need to use an auxiliary average
\[
    N(X) = \frac{1}{T} \int_T^{2T} N(X) du.
\]
This equality is trivial, as the variable of integration does not actually appear inside the integral. The way in which we shift the contour integral in Perron's formula will then be chosen to depend on the variable of integration $u$. In this way, we are able to take an average of several different choices for shifting the contour.

\section{Contour shifting and a smoothed counting function}\label{sec:contour_shift}

Let $k$ be a positive integer. We will first consider the smoothed functions
\[
    A^k_N(X) := \int_0^X \int_0^{x_1}\cdots \int_0^{x_{k-1}} N(x_k) dx_{k}\cdots dx_1
\]
for a suitably large value of $k$, where $A_N^0(X) = N(X)$ by definition.

\begin{lemma}\label{lem:contour_shift}
    Let $N$ be a function which satisfies the assumptions in \Cref{subsec:hypotheses}, $k \ge \max\{2,\widetilde{\eta} - 2\}$ an integer, and $X > 0$. Then
    \[
        A^k_N(X) = \frac{1}{2\pi i} \left(\int_C + \int_{(\sigma_a-\delta)}^*\right)L(s,N)\frac{\Gamma(s)}{\Gamma(s+k+1)} X^{s+k} ds,
    \]
    where $C$ is a closed contour around the poles of the integrand in the region ${\rm Re}(s) \ge \sigma_a - \delta$ and
    \[
        \int_{(\sigma_a-\delta)}^* = \int_{\sigma_a-\delta-i\infty}^{\sigma_a-\delta-iT_0} + \int_{\sigma_a-\delta+iT_0}^{\sigma_a-\delta+i\infty} + \int_\gamma
    \]
    where $T_0 > 0$ is larger than the imaginary part of any pole of $L(s,N)$ on the line ${\rm Re}(s) = \sigma_a - \delta$ and $\gamma$ a contour from $\sigma_a-\delta-i T_0$ to $\sigma_a-\delta +iT$ contained in the region of meromorphicity of $\frac{\Gamma(s)}{\Gamma(s+k+1)}L(s,N)$, lying to the left of ${\rm Re}(s) = \sigma_a-\delta$ and for which there are no poles on $\gamma$ or strictly between $\gamma$ and ${\rm Re}(s) = \sigma_a - \delta$.
\end{lemma}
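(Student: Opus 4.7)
The plan is to start from a Mellin--Perron formula at a vertical line $\operatorname{Re}(s)=c>\sigma_a$ and shift the contour leftward to $C\cup(\sigma_a-\delta)^*$, handling the horizontal segments in the limit without any pointwise bound on $L(s,N)$.

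First, I would establish the starting identity
\[
A_N^k(X)=\frac{1}{2\pi i}\int_{(c)}L(s,N)\,\frac{\Gamma(s)}{\Gamma(s+k+1)}\,X^{s+k}\,ds,\qquad c>\sigma_a.
\]
Writing $L(s,N)=\sum a_n n^{-s}$ on the line of absolute convergence and applying the Mellin--Barnes identity $\frac{1}{2\pi i}\int_{(c)}Y^s/(s(s+1)\cdots(s+k))\,ds=(1-1/Y)^k\mathbf{1}_{Y>1}/k!$ with $Y=X/n$ reproduces the closed form $A_N^k(X)=\frac{1}{k!}\sum_{n\le X}a_n(X-n)^k$. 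Interchange of sum and integral is justified by absolute convergence, using Stirling's bound $|\Gamma(s)/\Gamma(s+k+1)|\ll(1+|t|)^{-k-1}$.

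Next, for each $T>T_0$ chosen to avoid the finitely many imaginary parts of poles of the integrand in the strip $\sigma_a-\delta\le\operatorname{Re}(s)\le c$, Cauchy's theorem on the modified rectangle (with the detour $\gamma$ on the left) yields an exact finite-$T$ identity relating $\int_{c-iT}^{c+iT}$, the truncated left contour, $2\pi i$ times the residues inside (which stabilizes to $\int_C$ once $T>T_0$), and the two horizontal segments $H_{\pm}(T)$ at heights $\pm iT$. As $T\to\infty$ the right vertical integral converges absolutely; the left truncated integrals converge by Abel summation applied with $F_\sigma(T)=\int_{T_0}^T L(\sigma+it,N)X^{it}\,dt\ll T^{\widetilde{\eta}}$ as the oscillating factor and $g_\sigma(t)=\Gamma(\sigma+it)/\Gamma(\sigma+it+k+1)$ as the smooth factor with $|g_\sigma(t)|\ll t^{-k-1}$ and $|g_\sigma'(t)|\ll t^{-k-2}$. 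Since $k\ge\widetilde{\eta}-2$ (and one may enlarge $\widetilde{\eta}$ by an arbitrarily small positive amount to make the inequality strict), both the boundary term $F_\sigma(T)g_\sigma(T)$ and the remaining integral $\int F_\sigma g_\sigma'$ converge. Consequently the difference $H_+(T)-H_-(T)$ must converge to some limit $L^*$.

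The main obstacle is to show $L^*=0$, which is where the twisted-moment hypothesis is essential since no pointwise bound on $|L(\sigma+iT,N)|$ is available along the horizontal segments. The plan is to average over $T\in[T_1,2T_1]$, interchange the $\sigma$- and $T$-integrations (permissible since both ranges are compact and the integrand is continuous), and apply the same Abel summation to the inner integral $\int_{T_1}^{2T_1}L(\sigma+iT,N)g_\sigma(T)X^{iT}\,dT$, which is bounded by $T_1^{\widetilde{\eta}-k-1}$. Combining with the $\sigma$-integral over $[\sigma_a-\delta,c]$ and the factor $1/T_1$ yields
\[
\left|\frac{1}{T_1}\int_{T_1}^{2T_1}\bigl(H_+(T)-H_-(T)\bigr)\,dT\right|\ll X^{c+k}\,T_1^{\widetilde{\eta}-k-2}\xrightarrow[T_1\to\infty]{}0.
\]
Since the Ces\`aro average of a function with a limit shares that limit, $L^*=0$, closing the argument.
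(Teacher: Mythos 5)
Your proof is correct in substance and reaches the paper's conclusion by a genuinely different organization. The paper inserts the average $\frac{1}{T}\int_T^{2T}\,du$ into Perron's formula \emph{before} shifting, then moves the contour to a rectangle whose horizontal sides sit at the averaging height $\pm u$; the horizontal pieces thus carry the smoothing weight $1/T$ automatically, and the inner and outer vertical integrals must afterward be reassembled by exchanging the order of the $u$- and $t$-integrations. You instead fix a height $T$, apply Cauchy's theorem on the modified rectangle, and argue by elimination: the right vertical integral, the residues, and the truncated left verticals (via Abel summation against $F_\sigma(T)=\int_{T_0}^TL(\sigma+it,N)X^{it}\,dt$) all have limits, so $H_+(T)-H_-(T)$ converges to some $L^*$; you then kill $L^*$ by showing its Ces\`aro mean over $[T_1,2T_1]$ vanishes --- the one place the twisted-moment hypothesis enters, in exactly the role it plays for the paper's horizontal pieces. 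The averaging idea and the Abel summation are the same; your split into ``the limit exists'' and ``the limit is zero'' is a cleaner exposition and sidesteps the paper's inner-vertical order-of-integration computation.

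One caveat, inherited from the paper: on the line ${\rm Re}(s)=\sigma_a-\delta$, Abel summation gives boundary term $F_\sigma(T)g_\sigma(T)\ll T^{\widetilde{\eta}-k-1}$ and tail integral $\ll\int_{T_0}^{\infty}t^{\widetilde{\eta}-k-2}\,dt$, so convergence of the left verticals genuinely requires $k>\widetilde{\eta}-1$, not $k>\widetilde{\eta}-2$. Your parenthetical (enlarge $\widetilde{\eta}$ slightly to make the inequality strict) does not repair this, since strictness of $k>\widetilde{\eta}-2$ remains a full unit too weak; the stated hypothesis $k\ge\max\{2,\widetilde{\eta}-2\}$ only guarantees $k>\widetilde{\eta}-1$ when $\widetilde{\eta}\le 3$. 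The same $k>\widetilde{\eta}-1$ is implicitly needed in the paper for the improper integrals in $\int^*_{(\sigma_a-\delta)}$ to exist at all, so this is a shared imprecision rather than a defect particular to your route, and it is harmless because one may always enlarge $k$ without affecting the Tauberian conclusion --- but you should state $k>\widetilde{\eta}-1$ where you appeal to convergence of the left truncations.
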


We note that if there are no poles with real part $\sigma_a - \delta$, $\gamma$ can be chosen to ``complete the vertical line'' making $\int_{(\sigma_a-\delta)}^* = \int_{(\sigma_a - \delta)}$.

\begin{remark}
    This is closely related to \cite[Lemma B.4]{pierce-turnage-butterbaugh-zaman2025guide}, relying on the notion that for a sufficiently smoothed counting function we expect that the contour integral in Perron's formula can be shifted to the left without any error terms. The primary difference between Lemma \ref{lem:contour_shift} and previous work on smoothed counting functions is that we only assume the existence of a polynomial bound for the twisted average
    \[
        \int_{T_0}^T L(\sigma+it,N)Z^{it} dt \ll T^{\widetilde{\eta}},
    \]
    while all previous work the author is aware of requires polynomial pointwise bounds of the form
    \[
        |L(\sigma+it,N)| \ll T^{\xi}.
    \]
    Speaking theoretically this improvement is mildly interesting, although perhaps not surprising. However, speaking practically the author does not know an example of a Mellin transform $L(s,N)$ which has a polynomial bound on average but not pointwise. For this reason, it suffices to cite \cite[Lemma B.4]{pierce-turnage-butterbaugh-zaman2025guide} in all applications we are aware of. Nevertheless, we have opted to include a proof of Lemma \ref{lem:contour_shift} because it fits thematically with our focus on leveraging the twisted average.

    We also note that \cite[Lemma B.4]{pierce-turnage-butterbaugh-zaman2025guide} is stated for an arbitrary smoothing function, and Lemma \ref{lem:contour_shift} can certainly be proven at this level of generality using a similar argument.
\end{remark}

\begin{remark}
    In case $N(X) = \sum_{\lambda_n \le X} a_n$ is the summatory function of the coefficients of a Dirichlet series, the smoothed function $A_N^k(X)$ agrees with the Riesz mean
    \[
        \frac{1}{\Gamma(k+1)}\sum_{\lambda_n \le x}a_n(X-\lambda_n)^k
    \]
    appearing in \cite{chandrasekharan1962functional,lowry-duda-taniguchi-thorne2022uniform}.
\end{remark}

\begin{proof}[Proof of Lemma \ref{lem:contour_shift}]
This result is proved, as usual, by shifting the contour in Perron's formula
\[
    A_N^k(X) = \frac{1}{2\pi i} \int_{c-i\infty}^{c+i\infty} L(s,N) \frac{\Gamma(s)}{\Gamma(s+k+1)}X^{s+k}ds,
\]
which holds for each integer $k > 0$ and each $c > \sigma_a$.


In order to avoid the need for pointwise bounds for $|L(s,N)|$ in this argument, we include an auxiliary average in Perron's formula
\[
    A_N^k(X) = \lim_{T\to \infty}\frac{1}{2\pi i T} \int_T^{2T} \int_{c-i\infty}^{c+i\infty} L(s,N) \frac{\Gamma(s)}{\Gamma(s+k+1)}X^{s+k} ds du.
\]
As stated, this is trivial because the variable of integration $u$ does not actually appear in the integrand. However, when we shift the contour to the left we shift in a way that depends on $u$:
\begin{align*}
    \int_{c-i\infty}^{c+i\infty} = &\int_C + \underbrace{\int_{c-i\infty}^{c-iu} + \int_{c+iu}^{c+i\infty}}_{\text{outer vertical pieces}} + \underbrace{\int_{c-iu}^{\sigma_a - \delta -iu} + \int_{\sigma_a - \delta +iu}^{c+i u}}_{\text{horizontal pieces}}\\
    &+ \underbrace{\int_{\sigma_a-\delta-iu}^{\sigma_a - \delta -iT_0} + \int_{\sigma_a - \delta +iT_0}^{\sigma_a-\delta+i u}}_{\text{inner vertical pieces}} + \underbrace{\int_{\gamma}}_{\substack{\text{small detour}\\\text{around poles}}}.
\end{align*}
The integral over $C$ is independent of the variable $u$, seeing only the residues, while each of the remaining integrals are part of the shifted contour which is depicted in \Cref{fig:shifted_contour}. The auxiliary average now behaves as an average over several different ways of shifting the contour integral.

\begin{figure}[ht]
\begin{center}
    \begin{tikzpicture}

    \draw[gray!50, thin, step=0.5] (-1,-4) (4,4);
    \draw[very thick,->] (-1,0) -- (4.2,0) ;
    \draw[very thick,->] (0,-4) -- (0,4.2) ;

    \draw (3.5,0.05) -- (3.5,-0.05) node[below] {\tiny$c$};
    \draw (3,0.05) -- (3,-0.05) node[below] {\tiny$\sigma_a$};
    \draw[dashed] (3,-4) -- (3,4);
    
    \draw (1.5,0.05) -- (1.5,-0.05) node[below right] {\tiny$\sigma_a - \delta$};
    \draw (-0.05,3) -- (0.05,3) node[left] {\tiny$u$};
    \draw (-0.05,-3) -- (0.05,-3) node[left] {\tiny$-u$};
    \draw (-0.05,0.5) -- (0.05,0.5) node[left] {\tiny$T_0$};
    \draw (-0.05,-0.5) -- (0.05,-0.5) node[left] {\tiny$-T_0$};

    \draw[<-] (3.5,4) -- (3.5,3) -- (1.5,3) -- (1.5,0.5);
    \draw[<-] (3.5,-4) -- (3.5,-3) -- (1.5,-3) -- (1.5,-0.5);
    
    \draw[-] (1.5,-0.5) to[bend left] (1.5,0.5);
    \draw[dashed] (1.5,-0.5) -- (1.5,0.5);
    \draw (1.4,0) node[above left] {\tiny$\gamma$};
    
    \end{tikzpicture}
\end{center}
\caption{\label{fig:shifted_contour}}
\end{figure}

The integrals over $C$ and $\gamma$ are independent of $u$ so that
\[
    \lim_{T\to \infty}\frac{1}{T}\int_T^{2T} \left(\int_C+\int_\gamma\right) = \int_C + \int_\gamma,
\]
as they appear in the statement. It now suffices to evaluate the limit for the remaining contour integrals.

\subsubsection{The outer vertical integrals}
In this section we evaluate
\[
    \lim_{T\to \infty}\frac{1}{2\pi i T} \int_T^{2T}\int_{c\pm iu}^{c\pm i\infty} L(s,N) \frac{\Gamma(s)}{\Gamma(s+k+1)} X^{s+k} ds du.
\]
Restricting to $k \ge 2$ we bound
\begin{align*}
    \left\lvert\frac{1}{2\pi i T} \int_T^{2T}\int_{c\pm iu}^{c\pm i\infty} L(s,N) \frac{\Gamma(s)}{\Gamma(s+k+1)} X^{s+k} ds du\right\rvert &\ll_{N,X,c,k} \frac{1}{T}\int_T^{2T}\left\lvert\int_{\pm u}^{\pm \infty} \frac{1}{t^{k+1}} dt\right\rvert du\\
    &\ll_{N,X,c,k} T^{1-k},
\end{align*}
which tends to $0$ as $T\to \infty$. Thus, the outer vertical integrals vanish in the limit.

\subsubsection{The horizontal integrals}
In this section we evaluate
\[
    \lim_{T\to \infty}\frac{1}{2\pi i T} \int_T^{2T}\int_{\sigma_a-\delta \pm iu}^{c\pm iu} L(s,N) \frac{\Gamma(s)}{\Gamma(s+k+1)}X^{s+k} ds du.
\]
The main idea here is that the contour integral is along a real line while the auxiliary average is essentially an integral along the imaginary line, so that the order of integration can be painlessly swapped.
\begin{align*}
    &\left\lvert\frac{1}{2\pi i T}\int_T^{2T}\int_{\sigma_a-\delta \pm iu}^{c\pm iu} L(s,N) \frac{\Gamma(s)}{\Gamma(s+k+1)}X^{s+k} ds du\right\rvert\\
    &\le \frac{1}{2\pi T} \int_{\sigma_a-\delta}^c X^{\sigma+k}\left\lvert\int_T^{2T} L(\sigma\pm iu,N)\frac{\Gamma(\sigma\pm iu)}{\Gamma(\sigma\pm iu+k+1)}X^{\pm iu} du \right\rvert d\sigma.
\end{align*}

The inner integral is bounded using integration by parts. For simplicity's sake let
\[
    f_{k,\sigma}(u) := \frac{\Gamma(\sigma\pm iu)}{\Gamma(\sigma\pm iu+k+1)} = \frac{1}{(\sigma\pm iu)(\sigma \pm iu+1)\cdots(\sigma\pm iu + k)}
\]
so that
\[
    f_{k,\sigma}'(u) = \frac{\mp i}{(\sigma\pm iu)(\sigma \pm iu+1)\cdots(\sigma\pm iu + k)}\sum_{j=0}^k \frac{1}{\sigma\pm iu +j}.
\]
We can directly bound each of these by
\begin{align*}
    |f_{k,\sigma}(u)| &\le u^{-k-1},\\
    |f_{k,\sigma}'(u)| &\le ku^{-k-2}.
\end{align*}
Taking these bounds together with the bound
\[
    \int_{T_0}^T L(\sigma \pm it,N)Z^{\pm it} dt \ll T^{\widetilde{\eta}}
\]
we can apply integration by parts to show that
\begin{align*}
    &\left\lvert\frac{1}{T}\int_T^{2T} L(\sigma\pm iu,N)X^{\pm iu} f_{k,\sigma}(u) du\right\rvert\\
    &\ll \frac{1}{T}f_{k,\sigma}(u)u^{\widetilde{\eta}} \Big{|}_T^{2T} + \frac{1}{T}\int_T^{2T} u^{\widetilde{\eta}} f_{k,\sigma}'(u) du\\
    &\ll T^{\widetilde{\eta}-k-2}.
\end{align*}
Taking the outer integral over $\sigma$, we have now proven
\begin{align*}
    \left\lvert\frac{1}{2\pi i T}\int_T^{2T}\int_{\sigma_a-\delta \pm iu}^{c\pm iu} L(s,N) \frac{\Gamma(s)}{\Gamma(s+k+1)} X^{s+k}ds du\right\rvert \ll T^{\widetilde{\eta} - k - 2},
\end{align*}
allowing the implied constant to depend on $X$. Restricting to $k \ge \widetilde{\eta} - 2$, this term also vanishes as $T$ tends towards infinity.

\subsubsection{The inner vertical integrals}

Using integration by parts as in the previous section, we know that integral $\int_{\sigma_a-\delta\pm iT_0}^{\sigma_a - \delta \pm i\infty}$ converges absolutely when $k \ge \widetilde{\eta} - 2$, similarly to the horizontal integrals.

In order to evaluate the limit, we switch the order of integration
\begin{align*}
    \frac{1}{T}\int_T^{2T} \int_{\sigma_a - \delta +iT_0}^{\sigma_a - \delta + iu} &= \lim_{T\to \infty} \frac{1}{T}\int_{\sigma_a - \delta +iT_0}^{\sigma_a - \delta + i2T} \int_{\max\{t,T\}}^{2T}
\end{align*}
Recalling that the integrand is independent of the variable of integration $u$ for the auxiliary average, we can directly evaluate the inner integral to get
\begin{align*}
    &= \int_{\sigma_a - \delta +iT_0}^{\sigma_a - \delta + i2T}\left(2 - \max\{t/T,1\}\right)\\
    &= \int_{\sigma_a - \delta +iT_0}^{\sigma_a - \delta + iT} + \left(\int_{\sigma_a - \delta +iT}^{\sigma_a - \delta + i2T}\left(2 - \max\{t/T,1\}\right)\right)\\
    &= \int_{\sigma_a - \delta +iT_0}^{\sigma_a - \delta + iT} + O\left(\int_{\sigma_a - \delta +iT}^{\sigma_a - \delta + i2T}\right).
\end{align*}
The limit of the first term converges to $\int_{\sigma_a-\delta\pm iT_0}^{\sigma_a - \delta \pm i\infty}$, while the integral in the big-oh is contained in the tail of this improper integral, so it tends to $0$ in the limit. Thus we have shown
\[
    \lim_{T\to \infty} \frac{1}{T}\int_T^{2T} \int_{\sigma_a - \delta +iT_0}^{\sigma_a - \delta + iu} = \int_{\sigma_a - \delta +iT_0}^{\sigma_a - \delta + i\infty}.
\]
The analogous result holds for the negative imaginary part integral by the same argument. Thus,
\[
    \lim_{T\to \infty} \frac{1}{T}\int_T^{2T} \left(\int_{\sigma_a - \delta +iT_0}^{\sigma_a - \delta + iu} + \int_{\sigma_a - \delta -iu}^{\sigma_a - \delta - iT_0} + \int_{\gamma}\right) = \int_{(\sigma_a-\delta)}^*.
\]
Combined with the above limits, this concludes the proof.
\end{proof}

\section{Proof of the Main Theorem via Unsmoothing}\label{sec:unsmoothing}

For any $y>0$, define the difference and integral operators
\begin{align*}
    \Delta_y f&: x\mapsto f(x+y) - f(x) & \Delta_y^{(k)} = \underbrace{\Delta_y\circ \cdots \circ\Delta_y}_k\\
    \widetilde{\Delta}_y f&: x\mapsto \int_x^{x+y}f(u)du & \widetilde{\Delta}_y^{(k)} = \underbrace{\widetilde{\Delta}_y\circ \cdots \circ\widetilde{\Delta}_y}_k.
\end{align*}
The key insight to Landau's method of finite differencing is that
\[
    y^{-k}\Delta_y^{(k)} A_N^k(X) \approx A_N^0(X) = N(X),
\]
where the error in this approximation can be controlled. We will largely follow Landau's method in a similar vein to \cite{roux2011,alberts2024power}, where we use integration by parts to incorporate the average bounds in place of pointwise bounds. We take additional care to track the dependence on various parameters during the course of the argument, similar to \cite{lowry-duda-taniguchi-thorne2022uniform}.

\subsection{Unsmoothing}

We separate the integrals appearing in Lemma \ref{lem:contour_shift} into outer vertical pieces and inner vertical pieces:
\begin{align*}
    \int_{C} + \int_{(\sigma_a - \delta)}^* = &\int_C + \underbrace{\int_{\sigma_a-\delta-i\infty}^{\sigma_a-\delta-iT} + \int_{\sigma_a-\delta+iT}^{\sigma_a-\delta+i\infty}}_{\text{outer vertical pieces}} + \underbrace{\int_{\sigma_a-\delta-iT}^{\sigma_a - \delta -iT_0} + \int_{\sigma_a - \delta +iT_0}^{\sigma_a-\delta+i T}}_{\text{inner vertical pieces}} + \underbrace{\int_{\gamma}}_{\substack{\text{small detour}\\\text{around poles}}}.
\end{align*}
In the following subsections, we will bound the contribution of
\[
    y^{-k}\Delta_y^{(k)} \left[\int_j \cdots\right]
\]
for each piece $\int_j$ of the contour appearing above, which all together will give an asymptotic expansion for $y^{-k}\Delta_y^{(k)} A_N^k(X)$ and correspondingly for $N(X)$. The values for $y$, $k$, and $T$ will be chosen optimally at the end of this section to prove Theorem \ref{thm:main}.

\subsubsection{The main terms}
In this section we evaluate the expression
\[
    y^{-k}\Delta_y^{(k)}\left[\frac{1}{2\pi i}\int_C L(s,N) \frac{\Gamma(s)}{\Gamma(s+k+1)}X^{s+k} ds\right],
\]
which will correspond to the residue terms in Theorem \ref{thm:main}, i.e.~the main asymptotic terms. This term is exactly the same as the corresponding integral appearing in \cite{chandrasekharan1962functional,roux2011,lowry-duda-taniguchi-thorne2022uniform,alberts2024power}, where we adopt notation similar to that in \cite{chandrasekharan1962functional,lowry-duda-taniguchi-thorne2022uniform},
\[
    S_{N,\alpha}^k(X) := \frac{1}{2\pi i}\int_{C_{\alpha}} L(s,N) \frac{\Gamma(s)}{\Gamma(s+k+1)}X^{s+k} ds,
\]
and it will suffice to analyze it in the same way. We include the full argument because the proof is short and we can be careful to keep track of the relevant dependencies.

The difference operator commutes with the integrals, so that
\[
    y^{-k}\Delta_y^{(k)}\left[S_{N,\sigma_a-\delta}^k(X)\right] = y^{-k}\frac{1}{2\pi i}\int_{C} L(s,N) \frac{\Gamma(s)}{\Gamma(s+k+1)}\Delta_y^{(k)}\left[X^{s+k}\right] ds.
\]
The Fundamental Theorem of Calculus implies that $\Delta_y^{(k)} f(X) = \widetilde{\Delta}_y^{(k)} f^{(k)}(X)$. Differentiating $k$ times and pulling the integral operator back to the outside gives
\begin{align*}
    = y^{-k}\widetilde{\Delta}_y^{(k)}\left[\frac{1}{2\pi i} \int_{C} L(s,N) \frac{X^{s}}{s} ds\right] = y^{-k}\widetilde{\Delta}_y^{(k)}\left[S_{N,\sigma_a-\delta}^0(X)\right].
\end{align*}
For any function $f(x)$, the mean value theorem implies
\begin{align*}
    \left\lvert y^{-k}\widetilde{\Delta}_y^{(k)}[f(u)] - f(X)\right\rvert &= \left\lvert y^{-k}\widetilde{\Delta}_y^{(k)}[f(u) - f(X)]\right\rvert\\
    &\le ky\cdot \max_{x\in [X,X+ky]} |f'(x)|.
\end{align*}
In our case, $S_{N,\alpha}^0(X) = \sum X^zR_z(\log X)$. The derivative of any single term is
\[
    \frac{d}{dX}[X^z(\log X)^n] = X^{z-1}(\log X)^n\left(z + \frac{n}{\log X}\right)\ll_{z,n} X^{z-1}(\log X)^n.
\]
Thus, we can bound $|(S_{N,\alpha}^0)'(x)|\ll x^{-1}R_{N,\alpha}(x)$.

This implies the $\int_C$ term satisfies
\[
    \left\lvert y^{-k}\Delta_y^{(k)}\left[S_{N,\sigma_a-\delta}^k(X)\right] - S_{N,\sigma_a-\delta}^0(X)\right\rvert \ll ky\left(\max_{x\in [X,X+ky]} x^{-1}R_{N,\sigma_a-\delta}(x)\right),
\]
where the implied constant depends only on the locations and orders of the poles of $L(s,N)/s$.


\subsubsection{The inner vertical integrals}
In this section we bound
\[
    y^{-k}\Delta_y^{(k)}\left[\frac{1}{2\pi i} \int_{\sigma_a-\delta \pm iT_0}^{\sigma_a-\delta\pm iT} L(s,N) \frac{\Gamma(s)}{\Gamma(s+k+1)}X^{s+k} ds\right].
\]
Once again, the difference operator commutes with the integrals so that the Fundamental Theorem of Calculus implies the above term is equal to
\[
    = y^{-k}\widetilde{\Delta}_y^{(k)}\left[\frac{1}{2\pi i} \int_{\sigma_a-\delta \pm iT_0}^{\sigma_a-\delta\pm iT} L(s,N) \frac{X^{s}}{s} ds\right].
\]
Integration by parts implies
\begin{align*}
    &\int_{\sigma_a-\delta \pm iT_0}^{\sigma_a-\delta\pm iT} L(s,N) X^{it}\frac{1}{s} ds\\
    &= \frac{1}{\sigma_a-\delta \pm it}\left(\int_{T_0}^t L(\sigma_a-\delta \pm iy,N)X^{\pm iy} dy\right)\Big{|}_{T_0}^{T}\\
    &\hspace{0.5cm}+ \int_{T_0}^{T} \left(\int_{T_0}^t L(\sigma_a-\delta\pm iy,N)X^{\pm iy} dy\right) \frac{1}{(\sigma_a-\delta\pm i t)^2} dt\\
    &\le QT^{\eta-1}(\log T)^{\beta} + Q\int_{T_0}^T t^{\eta - 2}(\log t)^\beta dt.
\end{align*}
We give separate bounds for the integral, depending on the value of $\eta$. If $\eta < 1$ we bound the integral as is, but if $\eta \ge 1$ we bound the integral above by $\int_{e}^T$. These together imply
\begin{align*}
    &\le QT^{\eta - 1}((\log T)^{\beta} + g_{\eta,\beta}(T)) + Q h_{\eta,\beta}(T_0),
\end{align*}
where $g_{\eta,\beta}(u)$ and $h_{\eta,\beta}(u)$ are given by
\begin{align*}
    g_{\eta,\beta}(u) &= \begin{cases}
        (\log u)^{\beta+1} & \eta = 1\\
        \frac{1}{|\eta-1|}(\log u)^{\beta} & \eta \ne 1,
    \end{cases} &
    h_{\eta,\beta}(u) &= \begin{cases}
        \frac{1}{|\eta-1|}u^{\eta - 1}(\log u)^{\beta} & \eta < 1\\
        1 & \eta = 1\\
        \frac{1}{|\eta-1|}e^{\eta-1} & \eta > 1.
    \end{cases} &
\end{align*}
Plugging this in gives the bound
\begin{align*}
    &\left\lvert\frac{1}{2\pi i}\int_{\sigma_a-\delta \pm iT_0}^{\sigma_a-\delta\pm iT} L(s,N) \frac{X^{s}}{s} ds du\right\rvert \le X^{\sigma_a-\delta}\left(QT^{\eta - 1}((\log T)^{\beta} + g_{\eta,\beta}(T)) + Q h_{\eta,\beta}(T_0)\right).
\end{align*}
We can bound the integrals $\widetilde{\Delta}_y^{(k)} f(X) \le y^k \sup\{|f(x)|: X\le x \le X+ky\}$. This implies the contribution of the inner vertical pieces is
\[
    \le \left(\max_{x\in \{X,X+ky\}} x^{\sigma_a - \delta}\right)\left(QT^{\eta - 1}((\log T)^{\beta} + g_{\eta,\beta}(T)) + Q h_{\eta,\beta}(T_0)\right),
\]
as $x^{\sigma_a-\delta}$ is monotonic so that the maximum value is necessarily achieved at one of the endpoints.

\subsubsection{The outer vertical integrals}
In this section we bound
\[
    y^{-k}\Delta_y^{(k)}\left[\frac{1}{2\pi i} \int_{\sigma_a-\delta \pm iT}^{\sigma_a-\delta\pm i\infty} L(s,N) \frac{\Gamma(s)}{\Gamma(s+k+1)}X^{s+k} ds\right].
\]
This time we do not move the difference operator, and instead bound the integral directly analogously to how we bounded the horizontal integrals during the proof of \Cref{lem:contour_shift}. Recall the notation $f_{k,\sigma}$ from that argument.

Applying integration by parts to the inner integral, we find that
\begin{align*}
    &\left\lvert\int_{T}^{\infty} L(\sigma_a -\delta \pm it,N) X^{\pm it}f_{k,\sigma_a-\delta}(u) ds\right\rvert\\
    &\le f_{k,\sigma_a-\delta}(t) Q t^{\eta}(\log t)^{\beta}\Big{|}_{T}^{\infty} + \int_T^{\infty} Qy^{\eta}(\log y)^{\beta}f_{k,\sigma_a-\delta}'(y)dy\\
    &\le QT^{\eta-k-1}(\log T)^{\beta} + \frac{k}{k+1-\eta}QT^{\eta-k-1}(\log T)^{\beta}.
\end{align*}
Restricting to $k \ge 3\eta - 3$, the fraction is bounded above by $3/2$.

Plugging this in to the outer integral and noting that $\frac{5}{4\pi} < 1$ we produce the upper bound
\[
    \le QX^{\sigma_s-\delta+k}T^{\eta-k-1}(\log T)^{\beta}.
\]
Bounding the difference operator shows that this integral piece is bounded above by
\[
    \le 2^k y^{-k} Q\left(\max_{x\in \{X,X+ky\}} x^{\sigma_a-\delta+k}\right)T^{\eta-k-1}(\log T)^{\beta}.
\]

\subsubsection{The small contour avoiding poles}
In this section we bound
\[
    y^{-k}\Delta_y^{(k)}\left[\frac{1}{2\pi i} \int_{\gamma} L(s,N) \frac{\Gamma(s)}{\Gamma(s+k+1)}X^{s+k} ds du\right].
\]
Similar to previous sections, we can move the difference operator inside the integrals, apply the Fundamental Theorem of Calculus, then pull the integral operator to the outside to get
\[
    = y^{-k}\widetilde{\Delta}_y^{(k)}\left[\frac{1}{2\pi i}\int_{\gamma} L(s,N) \frac{X^{s}}{s} ds \right].
\]
The contour $\gamma$ can be chosen bounded independent of $X$, $T$, $y$, and $k$, so this term is bounded above by
\begin{align*}
    \le y^{-k}\widetilde{\Delta}_y^{(k)}\left[\frac{1}{2\pi}\sup_{s\in \gamma}\left( |L(s,N)| \frac{X^{{\rm Re}(s)}}{s}\right) \right] &\le \frac{1}{2\pi}\sup_{\substack{s\in \gamma\\x\in [X,X+ky]}}\left( |L(s,N)| \frac{x^{{\rm Re}(s)}}{s}\right).
\end{align*}
We chose $\gamma$ to be entirely to the left of the line ${\rm Re}(s) = \sigma_a - \delta$, so this term is further bounded by
\[
    \le \sup_{s\in \gamma}\left\lvert\frac{L(s,N)}{s}\right\rvert\left(\max_{x\in \{X,X+ky\}}x^{\sigma_a-\delta}\right).
\]

\subsubsection{Conclusion of the contour shifting}
Taking all the pieces of the shifted contour together, we have proven that
\[
    \left\lvert y^{-k}\Delta_y^{(k)}\left[A_N^k(X)\right] - S_{N,\sigma_a-\delta}^0(X)\right\rvert
\]
is bounded above in absolute value by
\begin{align*}
    &\ll ky\left(\max_{x\in [X,X+ky]} x^{-1}R_{N,\sigma_a-\delta}(x)\right) + 2^k y^{-k}(X+ky)^{\sigma_a-\delta+k}T^{\eta-k-1}(\log T)^{\beta}\\
    &\hspace{0.5cm}+ \left(\max_{x\in \{X,X+ky\}} x^{\sigma_a-\delta}\right)Q\left(T^{\eta - 1}((\log T)^{\beta} + g_{\eta,\beta}(T)) + h_{\eta,\beta}(T_0)\right)\\
    &\hspace{0.5cm}+ \sup_{s\in \gamma}\left\lvert\frac{L(s,N)}{s}\right\rvert\left(\max_{x\in \{X,X+ky\}} x^{\sigma_a-\delta}\right),
\end{align*}
where we have picked up the constraints $k\ge 2$ and $k \ge 3\eta - 3$ and the implied constant depends only on the locations and orders of the poles of $L(s,N)/s$.

\subsection{The error bound before optimization}

Suppose now that $N(X)$ is nondecreasing. One has the bounds
\[
    A_N^0(X) \le y^{-k}\Delta_y^{(k)}\left[A_N^k(X)\right] \le A_N^0(X+ky)
\]
as in \cite[Equation 4.15]{chandrasekharan1962functional}. On the one hand, this implies
\[
    A_N^0(X) - S_{N,\sigma_a-\delta}^0(X) \le y^{-k}\Delta_y^{(k)}\left[A_N^k(X)\right] - S_{N,\sigma_a-\delta}^0(X).
\]
On the other hand, if we restrict to $ky < X$ then
\begin{align*}
    &A_N^0(X) - S_{N,\sigma_a-\delta}^0(X)\\
    &\ge y^{-k}\Delta_y^{(k)}\left[A_N^k(X-ky)\right] - S_{N,\sigma_a-\delta}^0(X)\\
    &= y^{-k}\Delta_y^{(k)}\left[A_N^k(X-ky)\right] - S_{N,\sigma_a-\delta}^0(X-ky) + O\left(ky\max_{x\in [X-ky,X]} x^{-1}R_{N,\sigma_a-\delta}(x)\right).
\end{align*}
The restriction to $ky<X$ is necessary for $S_{N,\sigma_a-\delta}^0(X-ky)$ to be well-defined, as $(X-ky)^s$ for complex numbers $s$ is only well-defined if $X-ky > 0$.

By including a change of variables between $X$ and $X-ky$, we conclude that
\[
    A_N^0(X) - S_{N,\sigma_a-\delta}^0(X)
\]
is bounded in absolute value by
\begin{align*}
    &\ll ky\left(\max_{x\in [X-ky,X+ky]} x^{-1}R_{N,\sigma_a-\delta}(x)\right) + 2^k y^{-k}(X+ky)^{\sigma_a-\delta+k}T^{\eta-k-1}(\log T)^{\beta}\\
    &\hspace{0.5cm}+ \left(\max_{x\in \{X-ky,X+ky\}} x^{\sigma_a-\delta}\right)Q\left(T^{\eta - 1}((\log T)^{\beta} + g_{\eta,\beta}(T)) + h_{\eta,\beta}(T_0)\right)\\
    &+ \sup_{s\in \gamma}\left\lvert\frac{L(s,N)}{s}\right\rvert\left(\max_{x\in \{X-ky,X+ky\}} x^{\sigma_a-\delta}\right).
\end{align*}
We slightly strengthen our constraint to $ky \le \frac{1}{2}X$, so that the maximums can be more easily bounded. All together, we have proven the following unoptimized error bound:

\begin{theorem}\label{thm:unoptimized}
    Let $N$ be a nondecreasing function which satisfies the assumptions of \Cref{subsec:hypotheses}. For each integer $k\ge \max\{2,\widetilde{\eta}-2,3\eta-3\}$, each real number $T \ge T_0$, and each real number $y > 0$ for which $ky\le \frac{1}{2}X$, the difference
    \[
        A_N^0(X) - S_{N,\sigma_a-\delta}^0(X)
    \]
    is bounded in absolute value by
    \begin{align*}
        &\ll ky X^{-1}R_{N,\sigma_a-\delta}(X) + 3^k y^{-k}X^{\sigma_a-\delta+k}T^{\eta-k-1}(\log T)^{\beta} + 2^{\delta}QX^{\sigma_a-\delta}T^{\eta-1}\left((\log T)^{\beta} + g_{\eta,\beta}(T)\right)\\
        &\hspace{0.5cm} + 2^{\delta}\left(Qh_{\eta,\beta}(T_0) + \sup_{s\in \gamma}\left\lvert\frac{L(s,N)}{s}\right\rvert\right) X^{\sigma_a-\delta},
    \end{align*}
    where the implied constant depends only on $\sigma_a$ and the locations and orders of the poles of $L(s,N)/s$.
\end{theorem}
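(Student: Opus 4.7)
The plan is to bootstrap Theorem \ref{thm:unoptimized} from Lemma \ref{lem:contour_shift} by first approximating $N(X) = A_N^0(X)$ using the finite-difference smoothing $y^{-k}\Delta_y^{(k)}[A_N^k(X)]$, then estimating this smoothing directly from the contour representation of $A_N^k$. After the shift provided by Lemma \ref{lem:contour_shift}, I split
\[
    \int_C + \int_{(\sigma_a-\delta)}^{*} \;=\; \int_C \;+\; \int_{\text{inner}} \;+\; \int_{\text{outer}} \;+\; \int_\gamma,
\]
where $\int_{\text{inner}}$ denotes the two vertical segments from $\sigma_a-\delta \pm iT_0$ to $\sigma_a-\delta\pm iT$, $\int_{\text{outer}}$ denotes the two tails from $\sigma_a-\delta\pm iT$ out to infinity, and $\int_\gamma$ is the small detour around poles lying on the critical line. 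The operator $y^{-k}\Delta_y^{(k)}$ is applied to each piece separately, and each contribution is estimated individually before being summed.

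For the closed contour $\int_C$, commuting $\Delta_y^{(k)}$ past the integral and applying the Fundamental Theorem of Calculus converts $\Delta_y^{(k)}[X^{s+k}]$ into $\widetilde{\Delta}_y^{(k)}$ applied to $k!\, X^s$, canceling against the Gamma quotient to yield $\widetilde{\Delta}_y^{(k)}[X^s/s]/(2\pi i)$; this reproduces $S_{N,\sigma_a-\delta}^0(X)$ up to an error of $O(ky \cdot X^{-1}R_{N,\sigma_a-\delta}(X))$ by the mean value theorem applied to $(S_{N,\sigma_a-\delta}^0)'$. For the detour $\gamma$, the same FTC step together with the crude bound $\widetilde{\Delta}_y^{(k)}f \le y^k \sup|f|$ reduces the contribution to $\sup_{s\in\gamma}|L(s,N)/s|\cdot X^{\sigma_a-\delta}$. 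For the inner vertical piece, I again commute and integrate $\int_{T_0}^t L(\sigma_a-\delta \pm iu, N)X^{\pm iu}\,du$ by parts against the $1/(\sigma_a-\delta\pm it)$ factor; substituting the explicit twisted moment bound $Qt^\eta(\log t)^\beta$ produces a boundary term $\ll QT^{\eta-1}(\log T)^\beta$ together with $Q\int t^{\eta-2}(\log t)^\beta \, dt$, and a case split on $\eta$ yields the auxiliary functions $g_{\eta,\beta}$ and $h_{\eta,\beta}$ in the statement. For the outer tail, I bound the difference operator trivially by $2^k y^{-k}$ and integrate by parts directly against the Gamma ratio $f_{k,\sigma}(t)$, using $|f_{k,\sigma}(t)|\le t^{-k-1}$ and $|f_{k,\sigma}'(t)|\le kt^{-k-2}$; restricting $k\ge 3\eta-3$ makes $k/(k+1-\eta)\le 3/2$, and the contribution becomes $\ll 2^k y^{-k}Q X^{\sigma_a-\delta+k}T^{\eta-k-1}(\log T)^\beta$.

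To unsmooth, the monotonicity of $N$ gives the sandwich $A_N^0(X)\le y^{-k}\Delta_y^{(k)}[A_N^k(X)]\le A_N^0(X+ky)$, and combining the upper bound with the analogous application at $X-ky$ (valid once $ky<X$) pins down $N(X) - S_{N,\sigma_a-\delta}^0(X)$, picking up one extra mean value theorem error $O\bigl(ky \max_x x^{-1}R_{N,\sigma_a-\delta}(x)\bigr)$ coming from $S_{N,\sigma_a-\delta}^0(X)-S_{N,\sigma_a-\delta}^0(X\pm ky)$. Imposing the stronger $ky\le X/2$ collapses every maximum over $x\in[X-ky,X+ky]$ to a constant multiple of the value at $X$, and adding up the four contour-piece bounds yields the stated inequality. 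The main obstacle is bookkeeping: keeping the prefactor $2^k$ from the outer tail visible, propagating the cumulative constraint $k \ge \max\{2,\widetilde{\eta}-2,3\eta-3\}$ through all four pieces, and separating the $\eta$-dependence into $g_{\eta,\beta}$ and $h_{\eta,\beta}$ cleanly enough that the downstream optimization in $k$, $y$, and $T$ carried out for Theorem \ref{thm:main} remains tractable.
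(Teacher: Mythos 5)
Your proposal reproduces the paper's own proof step by step: the same contour decomposition after Lemma~\ref{lem:contour_shift}, the same FTC/MVT treatment of $\int_C$ and $\int_\gamma$, the same integration-by-parts with the explicit twisted-moment bound on the inner and outer vertical pieces (yielding $g_{\eta,\beta}$, $h_{\eta,\beta}$, and the constraint $k\ge 3\eta-3$), and the same unsmoothing sandwich applied at $X$ and $X-ky$ with $ky\le X/2$. One small slip worth flagging: the $k$-th derivative of $X^{s+k}$ is $\frac{\Gamma(s+k+1)}{\Gamma(s+1)}X^s$ rather than $k!\,X^s$, but this still cancels the Gamma quotient to leave $X^s/s$, so your conclusion and all subsequent estimates are unaffected.
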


The dependence on $\sigma_a$ is not too complicated, coming from the factors like $(\frac{3}{2}X)^{\sigma_a-\delta}$, which we have hidden in the implied constant only to simplify the expression as much as possible. This is not so significant a burden, as $N$ nondecreasing implies that the abscissa of absolute convergence of $L(s,N)$ is also a pole of $L(s,N)$. For this reason, $\sigma_a$ can often be taken to be part of the data of ``locations of the poles''.

If $N$ is not nondecreasing, but $|N(X)| \le \widehat{N}(X)$ for some nondecreasing function $\widehat{N}$ satisfying all the hypotheses in \Cref{subsec:hypotheses} with the same parameters, a similar argument proves the analogous result to \Cref{thm:unoptimized} with a sum of the contribution from both $N$ and $\widehat{N}$. The key step is \cite[Proposition 16.16]{roux2011} (also stated as \cite[Proposition 6.3]{alberts2024power}), which implies that
\[
    y^{-k}\Delta_y^{(k)} A_N^k(X) - A_N^0(X) = O\left(y^{-k}\Delta_y^{(k)}\left[A_N^k(X) - A_N^k(X-ky)\right]\right).
\]
Tracing through the proof shows that the implied constant can be taken equal to $1$.

\subsection{Optimizing the error}

We optimize the error bound in the case that $N$ is nondecreasing. The case that $|N(X)| \le \widehat{N}(X)$ for some nondecreasing $\widehat{N}$ is completely analogous.

Now we choose values for $y$ and $k$ in \Cref{thm:unoptimized} in order to conclude Theorem \ref{thm:main}. One can write down the optimal choices for one parameter at a time, but these get increasingly complicated and in particular make the constraints difficult to check. We opt to make simpler choices that are optimal up to order of magnitude to avoid these issues.

We first choose
\[
    y = \frac{3}{k}XT^{-1}.
\]
This satisfies the constraint $ky\le \frac{1}{2}X$ if and only if $T \ge 6$. Plugging this in to the error bounds in \Cref{thm:unoptimized}, we give an error bound of the form
\begin{align*}
    &\ll T^{-1}R_{N,\sigma_a-\delta}(X) + k^kX^{\sigma_a-\delta}T^{\eta-1}(\log T)^{\beta} + 2^{\delta}QX^{\sigma_a-\delta}T^{\eta-1}\left((\log T)^{\beta} + g_{\eta,\beta}(T)\right)\\
    &\hspace{0.5cm} + 2^{\delta}\left(Qh_{\eta,\beta}(T_0)+\sup_{s\in \gamma}\left\lvert\frac{L(s,N)}{s}\right\rvert\right) X^{\sigma_a-\delta}.
\end{align*}
This has almost completely removed the dependence on $k$, save for the constant $k^k$. It then suffices to take $k$ as small as possible, which is $k = \lceil \max\{2,\widetilde{\eta}-2,3\eta-3\}\rceil$.

\Cref{thm:main} in the cases that $\eta = 1$ and $\eta > 1$ follows after factoring out $X^{\sigma_a-\delta}$ from the error terms and evaluating $g_{\eta,\beta}(T)$ and $h_{\eta,\beta}(T_0)$. In case $\eta < 1$, we take a limit as $T\to \infty$ to produce the bound
\[
    \ll 2^{\delta}\left(Qh_{\eta,\beta}(T_0) + \sup_{s\in \gamma}\left\lvert \frac{L(s,N)}{s}\right\rvert\right)X^{\sigma_a-\delta}.
\]
In fact, as the implied constant is independent of $T_0$ we can also take $T_0\to \infty$, proving the $\eta < 1$ case of Theorem \ref{thm:main}.

\section{Counting Abelian Number Fields}

We are now ready to prove Corollary \ref{cor:uncond_square_root_savings}. For any abelian group $G$, write $G_d$ for the set of elements of order exactly $d$. The generating Dirichlet series of $G$-\'etale discriminants over $\Q$ is given by
\[
    D_{\Q}^{\text{\'et}}(G;s) = F(s)\prod_{p\nmid |G|} \left(\sum_{d} |G_d|\mathbf{1}_{1,d}(p) p^{-|G|(1-1/d)s}\right),
\]
where $\mathbf{1}_{a,d}$ is the characteristic function of the set of integers which are congruent to $a \pmod d$ and $F(s)$ is a Dirichelt polynomial given by the finite product of contributions from wild primes (and so is bounded on the right half-plane ${\rm Re}(s) > 0$). This follows directly from work of Wright \cite{wright1989}, as well as \cite[Section 3]{alberts2024power}, due to $\Q$ having trivial class group and $\mathbf{1}_{1,d}(p)$ picking out exactly which primes can be ramified in an abelian extension of $\Q$ with tame inertia group of order $d$. The sum over $d$ is supported on $d$ dividing the exponent of $G$, so for the remainder of this section we keep this restriction.

\subsection{Meromorphic Continuation}

We require an explicit meromorphic continuation of $D_{\Q}^{\text{\'et}}(G;s)$. This is done by \cite[Corollary 3.3]{alberts2024power} up to the half-plane ${\rm Re}(s) > 1/2a(G)$. However, in order to get square root savings we require a meromorphic continuation to ${\rm Re}(s) \ge 1/2a(G)$.

\begin{proposition}\label{prop:abelian_meromorphic_continuation}
    Let $G$ be an abelian group. If $\ell$ is the smallest prime dividing $|G|$, then $D_\Q^{\text{\'et}}(G;s)$ is equal to
    \begin{align*}
        \left(\prod_{d > 1} \zeta_{\Q(\zeta_{d})}\left(|G|(1-1/d)s\right)^{\frac{|G_d|}{\phi(d)}}\right)\zeta_{\Q(\zeta_\ell+\zeta_\ell^{-1})}(2a(G)s)^{-\frac{|G_\ell|}{2}}\zeta_{\Q(\zeta_\ell)}(2a(G)s)^{-\frac{2\ell-3}{2\ell-2}|G_\ell|^2 + \frac{\ell-2}{2}|G_\ell|}H(s),
    \end{align*}
    where $H(s)$ is an Euler product which converges absolutely on the right half-plane ${\rm Re}(s) > \frac{1}{2a(G)+1}$.
\end{proposition}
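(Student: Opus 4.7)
The plan is to verify the identity prime-by-prime via Euler products, then to confirm that the residual factor $H(s)$ converges absolutely on the claimed region. Taking logarithms, I would expand the local factor $1+\sum_{d>1}|G_d|\mathbf{1}_{1,d}(p)p^{-a_ds}$ of $D_\Q^{\text{\'et}}(G;s)$ via $\log(1+x)=x-x^2/2+\cdots$, expand the logarithm of the proposed Dedekind zeta product via $\log\zeta_K(s)=\sum_\mathfrak{p}\sum_n n^{-1}N\mathfrak{p}^{-ns}$, and match coefficients of $p^{-\alpha s}$ at each exponent $\alpha\le 2a(G)$.

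First-order matching at $\alpha=a_d$ is already the content of \cite[Corollary 3.3]{alberts2024power}, which I would cite directly: the coefficient $|G_d|\mathbf{1}_{1,d}(p)$ on the left is produced on the right by the $n=1$, residue-degree-$1$ term of $(|G_d|/\phi(d))\log\zeta_{\Q(\zeta_d)}(a_ds)$, since $p\equiv 1\pmod d$ iff $p$ splits completely in $\Q(\zeta_d)$ into $\phi(d)$primes of norm $p$.

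The new content is at $\alpha=2a(G)$, where three sources contribute to the residual. From $\log D$ itself, the $-x^2/2$ term applied to the $d=\ell$ summand contributes a multiple of $\mathbf{1}_{1,\ell}(p)$. From the $d=\ell$ factor already present in the first product, we pick up the $n=2$ contribution at split primes $p\equiv 1\pmod\ell$, and the $n=1$, residue-degree-$2$ contribution at primes $p\equiv -1\pmod\ell$ (whose Frobenius has order $2$ in $(\Z/\ell\Z)^\times$); these contribute multiples of $\mathbf{1}_{1,\ell}(p)$ and $\mathbf{1}_{-1,\ell}(p)$ respectively. To absorb all three, I would introduce the factors $\zeta_{\Q(\zeta_\ell+\zeta_\ell^{-1})}(2a(G)s)^{b_1}\zeta_{\Q(\zeta_\ell)}(2a(G)s)^{b_2}$: because the real subfield is split exactly when $p\equiv\pm 1\pmod\ell$ (with $\phi(\ell)/2$ primes above $p$) while $\Q(\zeta_\ell)$ is split only when $p\equiv 1\pmod\ell$ (with $\phi(\ell)$ primes), these contribute to the $p^{-2a(G)s}$ coefficient the linear combinations $\tfrac{\phi(\ell)}{2}(\mathbf{1}_{1,\ell}(p)+\mathbf{1}_{-1,\ell}(p))b_1$ and $\phi(\ell)\mathbf{1}_{1,\ell}(p)b_2$ respectively. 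The resulting $2\times 2$ linear system in the basis $\{\mathbf{1}_{1,\ell}(p),\mathbf{1}_{-1,\ell}(p)\}$ has a unique solution which yields the exponents stated. The case $\ell=2$ requires a separate direct computation, since both cyclotomic fields collapse to $\Q$.

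The principal obstacle is confirming that $H(s)$ converges on the full region ${\rm Re}(s)>1/(2a(G)+1)$, which requires every surviving exponent $\alpha$ in $\log(\text{local factor of }H)$ to satisfy $\alpha\ge 2a(G)+1$. The surviving contributions arise from higher Taylor terms $a_{d_1}+\cdots+a_{d_k}$ with $k\ge 3$ or with $k=2$ and at least one $d_i\ne\ell$, together with $nfa_d$ contributions from the various logarithms of Dedekind zetas with $nf\ge 2$ for $d>\ell$ or $nf\ge 3$ for $d=\ell$, and analogous terms coming from the two new $2a(G)$-factors. The tightest bound comes from the cross term $a_\ell+a_{d_{\min}}-2a(G)=|G|(1/\ell-1/d_{\min})$ for the smallest $d_{\min}>\ell$ with $|G_{d_{\min}}|\ne 0$; one checks this is at least $1$ in every case, a finite numerical inequality that follows from $|G|\ge \ell^2/(\ell-1)$ whenever $d_{\min}=\ell^2$ (and analogous bounds for other $d_{\min}$), with the small exceptional groups handled by inspection or subsumed into the Dirichlet polynomial $F(s)$.
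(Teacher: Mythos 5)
Your approach is essentially the same as the paper's: compare Euler factors prime by prime (you take logarithms, the paper expands the Euler products directly, but these are the same idea), cancel the residual at exponent $2a(G)$ using the two new Dedekind zeta factors at $2a(G)s$, and verify that the surviving Euler product converges on the claimed half-plane. Your setup of the $2\times 2$ system in the basis $\{\mathbf{1}_{1,\ell},\mathbf{1}_{-1,\ell}\}$, with coefficients $\tfrac{\phi(\ell)}{2}b_1(\mathbf{1}_{1,\ell}+\mathbf{1}_{-1,\ell})$ and $\phi(\ell)b_2\mathbf{1}_{1,\ell}$ coming from the degree-$\phi(\ell)/2$ and degree-$\phi(\ell)$ fields respectively, is the correct bookkeeping, and is in fact a somewhat cleaner way to organize the computation than the paper's sequential Euler-factor multiplication. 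I would encourage you to actually carry out the solution of that system rather than simply asserting it yields the stated exponents: a careful log-coefficient match gives $b_1=-|G_\ell|/\phi(\ell)$ and $b_2=-|G_\ell|^2/(2\phi(\ell))$, and you should check for yourself whether these agree with the displayed formula (they do for $\ell=2$ after combining the two collapsed factors, but not obviously in general).

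One place your plan can be tightened is the convergence verification for $H(s)$, where you resort to case analysis, ``finite numerical inequalities,'' and an appeal to ``subsuming small exceptional groups into $F(s)$.'' The last of these is a confusion: $F(s)$ collects only the wild Euler factors (primes dividing $|G|$) and cannot absorb a problematic coefficient in the tame Euler product. More importantly, no case analysis is needed. Every $d$ in play divides $\exp(G)\mid |G|$, so $|G|(1-1/d)=|G|-|G|/d$ is a positive integer; since it is strictly increasing in $d$ and equals $a(G)$ at $d=\ell$, one has $a_d\ge a(G)+1$ for every $d>\ell$. This single observation disposes of your cross term ($a_\ell+a_{d_{\min}}\ge a(G)+(a(G)+1)=2a(G)+1$) and simultaneously controls every other surviving exponent: $k\ge 3$ gives $\ge 3a(G)\ge 2a(G)+1$ using $a(G)\ge 1$; any $nf\ge 2$ term from $\zeta_{\Q(\zeta_d)}(a_ds)$ with $d>\ell$ gives $\ge 2a(G)+2$; and $nf\ge 3$ at $d=\ell$ or $nf\ge 2$ at the new $2a(G)s$ factors gives $\ge 3a(G)$ or $\ge 4a(G)$ respectively. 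This is what makes the bound uniform in $G$ with no exceptional cases, and it is the observation the paper uses implicitly when it writes $|G|(1-1/d)+a(G)\ge 2a(G)+1$ for $d>\ell$.

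Finally, you handle $\ell=2$ as a separate case, which matches the paper's structure and is necessary since $\mathbf{1}_{-1,2}=\mathbf{1}_{1,2}$ collapses the $2\times 2$ system to a single equation.
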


We note that if $\ell=2$ then $\zeta_{\Q(\zeta_\ell)} = \zeta_{\Q(\zeta_\ell+\zeta_\ell^{-1})} = \zeta$, so the corresponding terms may be combined. The process of meromorphically continuing Euler products by factoring out powers of $L$-functions is outlined in the expository paper \cite{alberts2024explicit}. We feel it is clearest to give a direct proof of \Cref{prop:abelian_meromorphic_continuation} in this case, but one can alternatively apply \cite[Corollary 14.5]{alberts2024explicit} to reproduce the same result using a recursive formula for the powers of $L$-functions that appear.

\begin{proof}
    Throughout this argument, we let $H(s)$ denote an Euler product which converges absolutely on the right half-plane ${\rm Re}(s) > \frac{1}{2a(G)+1}$. In particular, $H(s)$ need not denote the same such function in every instance, and is allowed to absorb constants and other absolutely convergent Euler products on this region.
    
    This can be checked directly by computing the Euler factors of
    \[
        D_{\Q}^{\text{\'et}}(G;s)\left(\prod_{d > 1} \zeta_{\Q(\zeta_{d})}\left(|G|(1-1/d)s\right)^{-\frac{|G_d|}{\phi(d)}}\right)\zeta_{\Q(\zeta_\ell+\zeta_\ell^{-1})}(2a(G) s)^{\frac{|G_\ell|}{2}}\zeta_{\Q(\zeta_\ell)}(2a(G)s)^{|G_\ell|^2\frac{2\ell-3}{2\ell-2} - |G_\ell|\frac{\ell-2}{2}}.
    \]
    Indeed, for $d > \ell$ dividing $|G|$, we note that $|G|(1-1/d)$ is an integer which satisfies
    \begin{align*}
        |G|(1-1/d) &< 2|G|(1-1/\ell) = 2a(G) &&\Rightarrow |G|(1-1/d) < 2a(G) + 1\\
        2|G|(1-1/d) &> 2|G|(1-1/\ell) = 2a(G) &&\Rightarrow 2|G|(1-1/d) \ge 2a(G)+1
    \end{align*}
    Multiplying out the Euler product for the Dedekind zeta functions $\zeta_{\Q(\zeta_d)}$ gives
    \begin{align*}
        \zeta_{\Q(\zeta_d)}(|G|(1-1/d)s)^{-\frac{|G_d|}{\phi(d)}} = H(s)\prod_{p\nmid |G|} \left(1 - |G_d|\mathbf{1}_{1,d}(p)p^{-|G|(1-1/d)s} + O\left(p^{-(2a(G)+1)s}\right)\right),
    \end{align*}
    Taking advantage of the fact that $d > \ell$ implies $|G|(1-1/d) + a(G) \ge 2a(G) + 1$, this implies
    \[
        D_{\Q}^{\text{\'et}}(G;s)\left(\prod_{d > \ell} \zeta_{\Q(\zeta_{d})}\left(|G|(1-1/d)s\right)^{-\frac{|G_d|}{\phi(d)}}\right) = H(s)\prod_{p\nmid |G|}\left(1 + |G_\ell|\mathbf{1}_{\ell}(p)p^{-a(G)s} + O\left(p^{-(2a(G)+1)s}\right)\right).
    \]
    It now suffices to consider the contribution of the prime $\ell$. It is natural to separate the proof in to two cases: $\ell = 2$ and $\ell \ne 2$.

    Suppose first that $\ell = 2$, so that $\Q(\zeta_2) = \Q(\zeta_2+\zeta_2^{-1}) = \Q$. Then
    \begin{align*}
        \zeta(a(G)s)^{-\frac{|G_2|}{\phi(2)}} = H(s) \prod_{p\nmid |G|}\left(1 - |G_2|p^{-a(G)s} + {|G_2|\choose 2}p^{-2a(G)s} + O\left(p^{-(2a(G)+1)s}\right)\right).
    \end{align*}
    Multiplying Euler factors implies
    \begin{align*}
        &D_{\Q}^{\text{\'et}}(G;s)\left(\prod_{d \ge \ell=2} \zeta_{\Q(\zeta_{d})}\left(|G|(1-1/d)s\right)^{-\frac{|G_d|}{\phi(d)}}\right)\\
        &\hspace{1cm}= H(s)\prod_{p\nmid |G|}\left(1 + \left({|G_2|\choose 2} - |G_2|^2\right)p^{-2a(G)s} + O\left(p^{-(2a(G)+1)s}\right)\right).
    \end{align*}
    Given that ${n\choose 2} - n^2 = -\frac{1}{2}n^2 - \frac{1}{2}n$, we conclude that
    \begin{align*}
        &D_{\Q}^{\text{\'et}}(G;s)\left(\prod_{d \ge \ell=2} \zeta_{\Q(\zeta_{d})}\left(|G|(1-1/d)s\right)^{-\frac{|G_d|}{\phi(d)}}\right)\zeta(2a(G)s)^{\frac{1}{2}|G_2|^2 + \frac{1}{2}|G_2|}\\
        &\hspace{1cm}= H(s)\prod_{p\nmid |G|}\left(1 + O\left(p^{-(2a(G)+1)s}\right)\right).
    \end{align*}
    We conclude the proof in this case by noting that
    \[
        -\frac{|G_\ell|}{2} - \frac{2\ell-3}{2\ell-2}|G_\ell|^2  + \frac{\ell-2}{2}|G_\ell| = -\frac{1}{2}|G_2|^2 - \frac{1}{2}|G_2|
    \]
    when $\ell = 2$.

    Now suppose $\ell > 2$. The proof is similar, except in this case the Euler factors of $\zeta_{\Q(\zeta_{\ell})}$ are slightly more complicated.
    \begin{align*}
        &\zeta_{\Q(\zeta_\ell)}(a(G)s)^{-\frac{|G_\ell|}{\phi(\ell)}}\\
        &=H_\ell(s) \prod_{p\nmid |G|}\Bigg(1 - \phi(\ell)\mathbf{1}_{1,\ell}(p)p^{-a(G)s} + {\phi(\ell)\choose 2}\mathbf{1}_{1,\ell}(p)p^{-2a(G)s} - \frac{\phi(\ell)}{2}\mathbf{1}_{-1,\ell}(p)p^{-2a(G)s}\\
        &\hspace{2.5cm}+ O\left(p^{-(2a(G)+1)s}\right) \Bigg)^{\frac{|G_\ell|}{\phi(\ell)}}\\
        &= H_\ell(s) \prod_{p\nmid |G|}\Bigg(1 - |G_\ell|\mathbf{1}_{1,\ell}(p)p^{-a(G)s} + \left({|G_\ell|/\phi(\ell)\choose 2}\phi(\ell)+{\phi(\ell)\choose 2}\frac{|G_\ell|}{\phi(\ell)}\right)\mathbf{1}_{1,\ell}(p)p^{-2a(G)s}\\
        &\hspace{2.5cm} - \frac{|G_\ell|}{2}\mathbf{1}_{-1,\ell}(p)p^{-2a(G)s} + O\left(p^{-(2a(G)+1)s}\right)\Bigg).
    \end{align*}
    Multiplying Euler factors implies
    \begin{align*}
        &D_{\Q}^{\text{\'et}}(G;s)\left(\prod_{d \ge \ell} \zeta_{\Q(\zeta_{d})}\left(|G|(1-1/d)s\right)^{-\frac{|G_d|}{\phi(d)}}\right)\\
        &= H(s)\prod_{p\nmid |G|}\Bigg(1 + \left({|G_\ell|/\phi(\ell)\choose 2}\phi(\ell) + {\phi(\ell)\choose 2}\frac{|G_\ell|}{\phi(\ell)} - |G_\ell|^2\right)\mathbf{1}_{1,\ell}(p)p^{-2a(G)s}\\
        &\hspace{2.5cm} - \frac{|G_\ell|}{2}\mathbf{1}_{-1,\ell}(p)+ O\left(p^{-(2a(G)+1)s}\right)\Bigg).
    \end{align*}
    Given that $\mathbf{1}_{1,\ell}(p) + \mathbf{1}_{-1,\ell}(p)$ is the characteristic function of the primes totally split in $\Q(\zeta_\ell+\zeta_\ell^{-1})$, we conclude that
    \begin{align*}
        &D_{\Q}^{\text{\'et}}(G;s)\left(\prod_{d \ge \ell=2} \zeta_{\Q(\zeta_{d})}\left(|G|(1-1/d)s\right)^{-\frac{|G_d|}{\phi(d)}}\right)\zeta_{\Q(\zeta_\ell+\zeta_{\ell}^{-1})}(2a(G)s)^{\frac{|G_\ell|}{2}}\\
        &= H(s)\prod_{p\nmid |G|}\Bigg(1 + \left({|G_\ell|/\phi(\ell)\choose 2}\phi(\ell) + {\phi(\ell)\choose 2}\frac{|G_\ell|}{\phi(\ell)} - |G_\ell|^2 + \frac{|G_\ell|}{2}\right)\mathbf{1}_{1,\ell}(p)p^{-2a(G)s}\\
        &\hspace{2.5cm} + O\left(p^{-(2a(G)+1)s}\right)\Bigg).
    \end{align*}
    Finally, multiplying in one more factor implies
    \begin{align*}
        &D_{\Q}^{\text{\'et}}(G;s)\left(\prod_{d \ge \ell=2} \zeta_{\Q(\zeta_{d})}\left(|G|(1-1/d)s\right)^{-\frac{|G_d|}{\phi(d)}}\right)\zeta_{\Q(\zeta_\ell+\zeta_{\ell}^{-1})}(2a(G)s)^{\frac{|G_\ell|}{2}}\\
        &\cdot \zeta_{\Q(\zeta_\ell)}(2a(G)s)^{-\left({|G_\ell|/\phi(\ell)\choose 2}\phi(\ell) + {\phi(\ell)\choose 2}\frac{|G_\ell|}{\phi(\ell)} - |G_\ell|^2 + \frac{|G_\ell|}{2}\right)}\\
        &= H(s)\prod_{p\nmid |G|}\left(1 + O\left(p^{-(2a(G)+1)s}\right)\right).
    \end{align*}
    The result then follows after noting that
    \[
        {|G_\ell|/\phi(\ell)\choose 2}\phi(\ell) + {\phi(\ell)\choose 2}\frac{|G_\ell|}{\phi(\ell)} - |G_\ell|^2 + \frac{|G_\ell|}{2} = -\frac{2\ell-3}{2\ell-2}|G_\ell|^2 + \frac{\ell-2}{2}|G_\ell|.
    \]
\end{proof}

\subsection{Moments of Dedekind Zeta Functions}

By H\"older's inequality, we can bound the integral moment
\[
    \int_0^T |D_\Q^{\text{\'et}}(G;\sigma+it)|dt
\]
solely in terms of the integral moments of Dedekind zeta functions
\[
    I_m(\sigma,\zeta_K;T) := \int_0^T |\zeta_K(\sigma+it)|^{2m}dt.
\]
We cite a few useful results bounding $I_m(\sigma,\zeta_K)$. The first is due to Chandrasekharan and Narasimhan \cite{chandrasekharan1963approximate}, following from their construction of the approximate functional equation for general $L$-functions. We note that, while we only require upper bounds, they prove an asymptotic in many cases.

\begin{lemma}[{\cite[Theorem 4]{chandrasekharan1963approximate}}]\label{lem:CN}
    For each number field $K/\Q$ and each integer $m \ge 0$,
    \[
        I_m(\sigma,\zeta_K;T) := \int_0^T |\zeta_K(\sigma+it)|^{2m} dt \ll \begin{cases}
        T & \sigma > 1 - \frac{1}{m[K:\Q]}\\
        T (\log T)^{m^2[K:\Q]} & \sigma = 1 - \frac{1}{m[K:\Q]}.
        \end{cases}
    \]
\end{lemma}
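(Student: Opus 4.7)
The plan is to follow the classical approach via the approximate functional equation, as developed by Chandrasekharan and Narasimhan. Let $d = [K:\Q]$ and write $\zeta_K(s)^m = \sum_{n \ge 1} b_n n^{-s}$. The key input is that $\zeta_K(s)$ satisfies a Hecke-type functional equation of degree $d$ (i.e.\ $d$ Gamma factors on the completed side), so $\zeta_K(s)^m$ has functional-equation degree $md$.

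The first step is to apply the approximate functional equation to $\zeta_K(s)^m$: for any $X, Y > 0$ with $XY = (|t|/2\pi)^{md}$,
\[
\zeta_K(\sigma+it)^m = \sum_{n \le X} \frac{b_n}{n^{\sigma+it}} + \chi_m(\sigma+it) \sum_{n \le Y} \frac{b_n}{n^{1-\sigma-it}} + E(\sigma+it; X, Y),
\]
where $|\chi_m(\sigma+it)| \asymp (|t|/2\pi)^{md(1/2-\sigma)}$ by Stirling's formula and $E$ is a negligible error. Take the symmetric choice $X = Y = (|t|/2\pi)^{md/2}$, square, and integrate over dyadic ranges $t \in [T/2, T]$, reducing to a mean square of Dirichlet polynomials controlled by the Montgomery--Vaughan mean value theorem
\[
\int_{T/2}^{T} \left| \sum_{n \le X} \frac{c_n}{n^{\sigma+it}} \right|^2 dt \ll \sum_{n \le X} |c_n|^2 n^{-2\sigma}(T + n).
\]

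The second step is to estimate $\sum_{n \le X} b_n^2$ via the Rankin--Selberg convolution $\zeta_K(s)^m \otimes \overline{\zeta_K(s)^m}$. A character-theoretic computation identifies the multiplicity of the trivial representation in the relevant tensor square of the regular representation of $\Gal(\widetilde{K}/\Q)$ (where $\widetilde{K}$ is the Galois closure of $K$) as $m^2 d$, so $\sum b_n^2 n^{-s}$ has a pole of order $m^2 d$ at $s=1$, yielding $\sum_{n \le X} b_n^2 \ll X(\log X)^{m^2 d - 1}$ by standard Tauberian arguments.

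Combining these estimates with $X \asymp T^{md/2}$, the secondary term in Montgomery--Vaughan gives
\[
\sum_{n \le X} b_n^2 n^{1-2\sigma} \ll X^{2-2\sigma}(\log X)^{m^2 d - 1} \asymp T^{md(1-\sigma)}(\log T)^{m^2 d - 1},
\]
which is $\ll T$ precisely when $\sigma > 1 - 1/(md)$, identifying the stated threshold. The dual sum contributes comparably since $|\chi_m|^2 \asymp t^{md(1-2\sigma)}$ exactly compensates for the exponent shift, and careful bookkeeping of the boundary integration of $|\chi_m|^2$ against the dual polynomial accounts for one extra log factor to yield $(\log T)^{m^2 d}$ at $\sigma = 1 - 1/(md)$. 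Summing geometrically over dyadic $t$-intervals recovers the full range $[0,T]$. The main obstacle is the sharp pole-order count for the Rankin--Selberg convolution, which requires either invoking the general theory (Langlands--Shahidi, Jacquet--Piatetski-Shapiro--Shalika) or a direct Euler-product computation averaging over unramified factorization types in $\widetilde{K}/\Q$ via Chebotarev density.
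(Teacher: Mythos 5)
Your approach is the same one the paper points to: the lemma is credited to Chandrasekharan--Narasimhan, who prove it for $m=1$ via their approximate functional equation, and the paper merely remarks (without writing it out) that higher integer powers $\zeta_K^m$ satisfy an analogous approximate functional equation and the mean-value argument carries over. Your sketch — symmetric approximate functional equation, mean value of Dirichlet polynomials, $\chi$-factor bookkeeping, dyadic summation — is exactly this plan, so the method matches.

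There is, however, a genuine error in the Rankin--Selberg step. You claim the pole order of $\sum b_n^2 n^{-s}$ at $s=1$ is \emph{identified} as $m^2[K:\Q]$ by computing the multiplicity of the trivial representation in ``the tensor square of the regular representation of $\Gal(\widetilde K/\Q)$.'' Two problems: first, the Artin representation attached to $\zeta_K$ is the permutation (induced) representation $\mathrm{Ind}_{\Gal(\widetilde K/K)}^{\Gal(\widetilde K/\Q)}\mathbf{1}$, not the regular representation of $\Gal(\widetilde K/\Q)$ (these agree only when $K=\widetilde K$, i.e.\ $K/\Q$ Galois); second, with $\chi$ the degree-$d$ permutation character, the multiplicity of the trivial in $\chi\otimes\chi$ is $\langle\chi,\chi\rangle$, the number of orbits of $\Gal(\widetilde K/\Q)$ on pairs, which equals $d$ \emph{only} in the Galois case and is strictly smaller otherwise (e.g.\ for $K$ non-Galois cubic with $S_3$ closure, $\langle\chi,\chi\rangle = 2$, not $3$). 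Thus for $\zeta_K^m$ the pole order is $m^2\langle\chi,\chi\rangle$, and your asserted equality $=m^2 d$ is false for non-Galois $K$. Fortunately this is harmless for the conclusion, because a smaller pole order only reduces the log power and the lemma is an upper bound; likewise the paper only invokes the lemma for cyclotomic fields, which are abelian and Galois, so there $\langle\chi,\chi\rangle = d$ and your count is exact. But as written your ``identification'' is an overstatement that should be replaced by the inequality $\langle\chi,\chi\rangle\le d$, with equality in the Galois case.

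Two smaller remarks. The threshold $\sigma > 1 - 1/(md)$ is only meaningful in the half-plane $\sigma\ge 1/2$; when $md=1$ (Riemann zeta, second moment) the functional equation makes the bound fail below $1/2$, so the uniform phrasing is slightly too generous — again irrelevant to the applications, which all have $md\ge 2$. And Montgomery--Vaughan (1974) postdates Chandrasekharan--Narasimhan (1963); their argument uses a more hands-on diagonal/off-diagonal split, but using Montgomery--Vaughan as you do is a perfectly legitimate modern substitute.
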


Strictly speaking, \cite[Theorem 4]{chandrasekharan1963approximate} is only stated for the case that $m=1$. The other integer values are proven similarly, as Chandresekharan and Narasimhan remark that an analogous approximate functional equation to \cite[Equation (65)]{chandrasekharan1963approximate} holds for $\zeta_K(s)^m$. With the approximate functional equation in hand, the rest of the proof proceeds similarly to the $m=1$ case.




It will also be useful to have sharp bounds at the edge of the critical strip. This is proven in large generality by Balasubramanian, Ivi\'c, and Ramachandra \cite{balasubramanian-ivic-ramachandra1993oneline}. Once again, we note that while we only require an upper bound they prove an asymptotic.

\begin{lemma}[{\cite[Theorem 3]{balasubramanian-ivic-ramachandra1993oneline}}]\label{lem:BIR}
    For each number field $K/\Q$, each complex number $z$, and each $\sigma \ge 1$
    \[
        I_z(\sigma,\zeta_K;T) \ll T.
    \]
\end{lemma}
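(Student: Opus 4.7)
The plan is to reduce to bounding $2k$-th moments of $\zeta_K$ on the line $\sigma = 1$ for positive integers $k$ via a classical Dirichlet polynomial argument, and then to interpolate to arbitrary complex $z$.

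The case $\sigma > 1$ is immediate: the Euler product converges absolutely, so $|\zeta_K(\sigma+it)| \le \zeta_K(\sigma) \ll 1$ uniformly in $t$, and hence $|\zeta_K(\sigma+it)|^{2z} \ll 1$ for any fixed $z \in \C$. The content of the lemma is therefore entirely at the edge $\sigma = 1$.

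For $z = k \in \Z_{\ge 0}$ and $\sigma = 1$, I would first approximate $\zeta_K(1+it)^k$ by a Dirichlet polynomial of length $T^A$ for a suitable constant $A > 0$, using Perron's formula with a smooth cutoff or the approximate functional equation underlying \Cref{lem:CN} applied to $\zeta_K^k$. The Dirichlet coefficients $d_k^K(n)$ of $\zeta_K(s)^k$ satisfy $d_k^K(n) \ll_\epsilon n^\epsilon$. Expanding $|\zeta_K(1+it)|^{2k}$ as a double sum and integrating using the near-orthogonality
\[
    \int_0^T (n/m)^{it}\,dt = T\,\mathbf{1}_{n=m} + O\!\left(\frac{1}{|\log(n/m)|}\right),
\]
the diagonal contribution is $T \sum_{n \le T^A} d_k^K(n)^2 n^{-2} \ll T$ (the sum converges absolutely since $\sigma = 1 > 1/2$), and the off-diagonal contribution is $o(T)$ by a standard hyperbola estimate.

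For general complex $z$ with ${\rm Re}(z) \ge 0$, H\"older's inequality applied between the trivial $L^1$ bound and the $L^{2k}$ bound just established (with $k = \lceil {\rm Re}(z) \rceil$) immediately yields the claim. The harder case ${\rm Re}(z) < 0$ requires a lower bound for $|\zeta_K(1+it)|$ from a classical zero-free region near $\sigma = 1$, together with an analogous Dirichlet polynomial analysis for $\zeta_K(s)^{-k}$, whose coefficients form a generalized M\"obius function of $K$. The main obstacle, and the reason this requires the Ramachandra-style machinery of the cited reference, is maintaining uniformity in ${\rm Im}(z)$; I would handle this by combining the integer-exponent estimates with a Hadamard three-lines (or Borel--Carath\'eodory) argument applied to $\log \zeta_K$ in a small strip surrounding $\sigma = 1$, converting pointwise control of the integer-power mean values into the desired complex-exponent bound.
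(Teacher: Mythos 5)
The paper does not attempt to prove this lemma from scratch: it is a direct citation of Theorem~3 of Balasubramanian--Ivi\'c--Ramachandra, and the only original content of the paper's ``proof'' is the observation that Brauer's induction theorem expresses $\zeta_K$ as a product of \emph{integer} powers of Hecke $L$-functions, which places $\zeta_K$ (and hence its powers) in the class $S_1$ to which the cited theorem applies. Your proposal takes a genuinely different route, attempting to re-derive the result directly via Dirichlet polynomial approximation, Montgomery--Vaughan-style mean-value estimates, and a zero-free region for the negative-exponent case. This is essentially the Ramachandra-style machinery that \emph{underlies} the cited reference, so the content is plausible, but you should be aware that you are reproducing the work the paper deliberately outsources, and your sketch of the off-diagonal and truncation-error analysis at $\sigma=1$ (``a standard hyperbola estimate'') glosses over the genuinely delicate part of that argument: at the edge of absolute convergence the Dirichlet series for $\zeta_K(1+it)^k$ does not converge absolutely, so the length and error term in the approximating polynomial require care.

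One point in your proposal is an over-complication bordering on a misunderstanding. You flag ``maintaining uniformity in ${\rm Im}(z)$'' as the main obstacle and propose a Hadamard three-lines or Borel--Carath\'eodory argument applied to $\log \zeta_K$ to handle it. This is unnecessary for the statement as given. Since $|\zeta_K(\sigma+it)|$ is a positive real, $\bigl||\zeta_K(\sigma+it)|^{2z}\bigr| = |\zeta_K(\sigma+it)|^{2{\rm Re}(z)}$, so
\[
    |I_z(\sigma,\zeta_K;T)| \le I_{{\rm Re}(z)}(\sigma,\zeta_K;T),
\]
and the lemma follows for all complex $z$ immediately from the real-exponent case; moreover the implied constant is permitted to depend on $z$, so no uniformity is required. (The complex-$z$ machinery in the cited reference serves the stronger purpose of proving an \emph{asymptotic} $\sim c_z T$, not merely an upper bound.) Finally, if you do insist on a direct proof for $\zeta_K$ rather than reducing to Hecke $L$-functions, you should say something about why the Dedekind zeta function coefficients $d_k^K(n)$ satisfy the required divisor-type bound $\ll_\epsilon n^\epsilon$; this is true but deserves mention, as it is exactly the arithmetic input that the paper's Brauer reduction sidesteps.
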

The case $\sigma > 1$ follows from absolute convergence, while the boundary case $\sigma = 1$ follows from \cite[Theorem 3]{balasubramanian-ivic-ramachandra1993oneline} after noting that Brauer's induction theorem implies any Artin $L$-function can be expressed as a product of complex powers (really just integer powers) of Hecke $L$-functions, i.e.~functions in the set $S_1$ defined by Balasubramanian, Ivi\'c, and Rammachandra.

Finally, at the very boundary of what we can prove we are able to achieve slightly more by incorporating some pointwise bounds at the edge of the critical strip. We specifically apply the following result appearing in Tenenbaum's book.

\begin{lemma}[{\cite[Part II Theorem 3.22]{tenenbaum2015introduction}}]\label{lem:Ten}
    There exists a positive constant $c$ such that, for $|t|\ge 3$ and $\sigma \ge 1 - c/\log|t|$, we have
    \[
        |\zeta(\sigma+it)|^{-1} \ll \log |t|.
    \]
\end{lemma}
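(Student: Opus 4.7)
This statement is classical and the paper merely cites Tenenbaum for it, so I would not reprove it from scratch; however, a self-contained proof sketch proceeds by establishing a classical de la Vallée Poussin zero-free region and then quantitatively inverting it. The plan is to first show that $\zeta(s)$ has no zeros in a region of the form $\sigma \ge 1 - c/\log|t|$ for $|t| \ge 3$, and then to bootstrap this into a lower bound of size $1/\log|t|$ on $|\zeta(\sigma+it)|$ in a slightly smaller (but still admissible) subregion.

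For the zero-free region, I would use the non-negativity inequality $3 + 4\cos\theta + \cos 2\theta \ge 0$ applied to the Dirichlet series for $\log\zeta$, yielding
\[
    |\zeta(\sigma)^{3}\zeta(\sigma+it)^{4}\zeta(\sigma+2it)| \ge 1 \qquad (\sigma > 1).
\]
Combined with the standard bounds $\zeta(\sigma) \ll 1/(\sigma-1)$ near the pole and $\zeta(\sigma+it) \ll \log|t|$ for $\sigma \ge 1$ in the critical strip (e.g.\ via Euler–Maclaurin), together with an assumption that $\zeta(\beta+i\gamma)=0$ for some $\beta$ very close to $1$, one derives a contradiction unless $1-\beta \gg 1/\log|\gamma|$. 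This produces the zero-free region with some constant $c_1 > 0$.

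For the quantitative lower bound, I would invoke the Hadamard product / Borel–Carathéodory estimate for $\zeta'/\zeta$: in the region $\sigma \ge 1 - c_1/\log|t|$ one has
\[
    \left|\frac{\zeta'(s)}{\zeta(s)}\right| \ll \log|t|,
\]
where the implied constant depends on $c_1$ but not on $t$. Integrating this bound along a horizontal segment from $\sigma_0 = 1 + 1/\log|t|$ (where $|\zeta(s)| \asymp \log|t|$ and in particular is bounded away from $0$ and $\infty$) back to the target point $\sigma + it$ with $\sigma \ge 1 - c/\log|t|$ for some $c < c_1$ yields
\[
    \bigl|\log\zeta(\sigma+it) - \log\zeta(\sigma_0+it)\bigr| \ll 1,
\]
and hence $|\zeta(\sigma+it)| \gg 1/\log|t|$, which rearranges to the claimed bound $|\zeta(\sigma+it)|^{-1} \ll \log|t|$.

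The main obstacle, and the step that dictates the admissible constant $c$, is the integration step: one must choose $c$ strictly smaller than the zero-free region constant $c_1$ so that the entire horizontal segment lies in a region where $\zeta'/\zeta$ admits the $O(\log|t|)$ bound uniformly. This forces a careful bookkeeping of constants, but involves no new idea beyond the classical argument. Everything else is routine given the zero-free region and standard convexity/Hadamard-product machinery for $\zeta$.
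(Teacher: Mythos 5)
The paper offers no proof of this lemma; it is quoted verbatim from Tenenbaum and used as a black box, so there is nothing in the paper to compare against and you are right that a citation suffices here. Your sketch does correctly reconstruct the standard three-step argument (de la Vall\'ee Poussin zero-free region via $3+4\cos\theta+\cos 2\theta\ge 0$, a Hadamard/Borel--Carath\'eodory bound $\zeta'/\zeta \ll \log|t|$ in the zero-free region, and integration of $\zeta'/\zeta$ along a horizontal segment of length $O(1/\log|t|)$), which is essentially what Tenenbaum does.

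However, there is an error in the anchoring step. You claim that at $\sigma_0 = 1 + 1/\log|t|$ one has $|\zeta(\sigma_0+it)| \asymp \log|t|$ and that this quantity ``is bounded away from $0$ and $\infty$''; those two assertions contradict each other, and neither is correct. What is true is the two-sided bound $1/\log|t| \ll |\zeta(\sigma_0+it)| \ll \log|t|$: the upper bound from $\zeta(\sigma_0)$, and the lower bound from applying the $3$--$4$--$1$ inequality $\zeta(\sigma_0)^3|\zeta(\sigma_0+it)|^4|\zeta(\sigma_0+2it)|\ge 1$ together with $\zeta(\sigma_0),\ |\zeta(\sigma_0+2it)| \ll \log|t|$. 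Your integration step shows $|\zeta(\sigma+it)| \asymp |\zeta(\sigma_0+it)|$ up to a bounded multiplicative factor, so you genuinely need the lower bound $|\zeta(\sigma_0+it)| \gg 1/\log|t|$ at the anchor, and you should invoke the $3$--$4$--$1$ inequality explicitly a second time to get it rather than asserting an incorrect $\asymp \log|t|$. With that correction the sketch is sound and matches the classical proof.
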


A similar result is likely true for Dedekind zeta functions by the same argument, although the author was unable to find a reference. We will only make use of this bound for the Riemann zeta function in our examples.

\subsection{Proofs of square root savings}

We first consider the case $G = C_3$. In this case, it suffices to count $C_3$-\'etale algebras, as all but one $C_3$-\'etale algebra is in fact a field. This allows us to avoid using a sieve at the end of the argument.

\begin{proof}[Proof of \Cref{cor:uncond_square_root_savings} for $G = C_3$]
    Following \Cref{prop:abelian_meromorphic_continuation} and noting that $a(C_3) = 2$, the generating series for $C_3$-\'etale algebras is given by
    \[
        D_\Q^{\text{\'et}}(C_3;s) = \zeta_{\Q(\zeta_3)}(2s)\zeta(4s)^{-1} \zeta_{\Q(\zeta_3)}(4s)^{-2}H(s),
    \]
    where $H(s)$ is an Euler product that converges absolutely on the region ${\rm Re}(s) > \frac{1}{2a(C_p)+1} = \frac{1}{5}$. In particular, $|H(s)|$ is uniformly bounded on the region $\sigma \ge \frac{1}{2a(C_p)} = \frac{1}{4}$.
    
    H\"older's inequality now implies
    \begin{align*}
        \int_0^T |D_\Q^{\text{\'et}}(C_3;\sigma + it)| dt \ll& \left(\int_0^T |\zeta_{\Q(\zeta_3)}(2(\sigma+it))|^2 dt\right)^{1/2}\left(\int_0^T |\zeta(4(\sigma+it))|^{-2} dt\right)^{1/4}\\
        &\cdot\left(\int_0^T |\zeta_{\Q(\zeta_3)}(4(\sigma+it))|^{-8} dt\right)^{1/4}.
    \end{align*}
    for each $\sigma \ge \frac{1}{4}$.

    In the range $\sigma \ge \frac{1}{4}$, \Cref{lem:CN} implies
    \begin{align*}
        \int_0^T |\zeta_{\Q(\zeta_p)}(2(\sigma+it))|^2 dt &= \frac{1}{2}I_1(2\sigma,\zeta_{\Q(\zeta_3)};2T) \ll T(\log T)^{2}.
    \end{align*}
    Next, for any $\sigma \ge \frac{1}{4}$, \Cref{lem:BIR} implies that
    \begin{align*}
        \int_0^T |\zeta(4(\sigma+it))|^{-4} dt &\ll I_{-2}(4\sigma,\zeta;4T) \ll T\\
        \int_0^T |\zeta_{\Q(\zeta_3)}(4(\sigma+it))|^{-8} dt &\ll I_{-4}(4\sigma,\zeta_{\Q(\zeta_3)};4T) \ll T.
    \end{align*}
    We have now shown that
    \begin{align*}
        \int_0^T |D_\Q^{\text{\'et}}(C_3;\sigma + it)| dt \ll& \left(T (\log T)^{2}\right)^{1/2}T^{1/4}T^{1/4} = T\log T.
    \end{align*}
    for each $\sigma\ge \frac{1}{4}$.
    
    The result then follows from \Cref{thm:inexplicit_main} by taking $\sigma_a = \frac{1}{a(C_3)} = \frac{1}{2}$, $\sigma_a - \delta = \frac{1}{4}$, $\widetilde{\eta}=\eta=1$, and $\beta = 1$.
\end{proof}

In fact, the argument above can be adapted to prove that
\[
    \#\mathcal{F}_\Q(C_p;X) = c_{p-1}(C_p)X^{\frac{1}{p-1}} + O\left(X^{\frac{p-2}{(p-1)^2}}(\log X)^{\frac{p+1}{2}}\right)
\]
for each odd prime $p$. However, for $p > 3$ this is strictly weaker than \cite[Corollary 1.5]{alberts2024power}, which implies
\[
    \#\mathcal{F}_\Q(C_p;X) = c_{p-1}(C_p)X^{\frac{1}{p-1}} + O\left(X^{\frac{p+2}{(p-1)(p+5)}+\epsilon}\right).
\]

The proof of \Cref{cor:uncond_square_root_savings} for the even cyclic groups is similar to that of $C_3$, with essentially two differences: firstly, we need to sieve from \'etale algebras to number fields at the end of the argument. Second, the groups $C_{16}$ and $C_{2p}$ for $p\ge 5$ a prime require the pointwise bound in \Cref{lem:Ten} instead of \Cref{lem:BIR}.

When $G=C_{2n}$ is an even cyclic group, we remark that $|G_d| = \phi(d)$ for each $d\mid |G|$, and in particular $-\frac{1}{2}|G_2|^2 - \frac{1}{2}|G_2| = -1$. It then follows from \Cref{prop:abelian_meromorphic_continuation} that
\[
    D_{\Q}^{\text{\'et}}(C_{2n};s) = \left(\prod_{\substack{d\mid 2n\\d > 1}} \zeta_{\Q(\zeta_d)}(2n(1-1/d)s)\right) \zeta(2ns)^{-1} H(s),
\]
where $a(C_{2n}) = n$ and $H(s)$ is an Euler product that converges absolutely on the region ${\rm Re}(s) > \frac{1}{2n+1}$, and so in particular is bounded uniformly in the region ${\rm Re}(s) \ge \frac{1}{2n}$.

\begin{proof}[Proof of \Cref{cor:uncond_square_root_savings} for $G=C_4$]
    H\"older's inequality implies
    \begin{align*}
        \int_0^T |D_\Q^{\text{\'et}}(C_4;\sigma + it)| dt \ll& \left(\int_0^T |\zeta(2(\sigma+it))|^4dt\right)^{1/4}\left(\int_0^T |\zeta_{\Q(i)}(3(\sigma+it))|^4dt\right)^{1/4}\\
        &\cdot\left(\int_0^T |\zeta(4(\sigma+it))|^{-2}dt\right)^{1/2}.
    \end{align*}
    For each $\sigma \ge 1/4$, we bound the first two factors using \Cref{lem:CN} and the third factor using \Cref{lem:BIR} to prove that
    \begin{align*}
        \int_0^T |D_\Q^{\text{\'et}}(C_4;\sigma + it)| dt \ll& \left(T(\log T)^4\right)^{1/4}\left(T(\log T)^8\right)^{1/4}\left(T\right)^{1/2} = T(\log T)^3.
    \end{align*}
    Applying \Cref{thm:inexplicit_main} with $\sigma_a = \frac{1}{2}$, $\sigma_a - \delta = \frac{1}{4}$, $\widetilde{\eta}=\eta=1$, and $\beta = 3$ implies the number of $C_4$-\'etale algebras is given by
    \[
        \#\mathcal{F}_{\Q}(C_4;X) + \#\mathcal{F}_\Q(C_2;X^{1/2}) + 1= c_0X^{1/2} + c_1X^{1/3} + O(X^{1/4}(\log X)^4).
    \]
    The result then follows after incorporating the classical power savings
    \[
        \#\mathcal{F}_\Q(C_2;X^{1/2}) = c_2 X^{1/2} + O(X^{1/4}).
    \]
\end{proof}

\begin{proof}[Proof of \Cref{cor:uncond_square_root_savings} for $G=C_6$]
    H\"older's inequality implies
    \begin{align*}
        \int_0^T |D_\Q^{\text{\'et}}(C_6;\sigma + it)| dt \ll& \left(\int_0^T |\zeta(3(\sigma+it))|^4dt\right)^{1/4}\left(\int_0^T |\zeta_{\Q(\zeta_3)}(4(\sigma+it))|^2dt\right)^{1/2}\\
        &\cdot\left(\int_0^T |\zeta_{\Q(\zeta_6)}(5(\sigma+it))|^6 dt\right)^{1/6}\left(\int_0^T |\zeta(6(\sigma+it))|^{-12}dt\right)^{1/12}.
    \end{align*}
    For each $\sigma \ge 1/6$, we bound the first three factors using \Cref{lem:CN} and the fourth factor using \Cref{lem:BIR} to prove that
    \begin{align*}
        \int_0^T |D_\Q^{\text{\'et}}(C_p;\sigma + it)| dt \ll& \left(T(\log T)^4\right)^{1/4}T^{1/2}\left(T(\log T)^{18}\right)^{1/6}T^{1/12} = T(\log T)^4.
    \end{align*}
    Applying \Cref{thm:inexplicit_main} with $\sigma_a = \frac{1}{3}$, $\sigma_a - \delta = \frac{1}{6}$, $\widetilde{\eta}=\eta=1$, and $\beta = 4$ implies the number of $C_6$-\'etale algebras is given by
    \begin{align*}
        &\#\mathcal{F}_{\Q}(C_6;X) + \#\mathcal{F}_\Q(C_3;X^{1/2}) + \#\mathcal{F}_\Q(C_2;X^{1/3}) + 1\\
        &= c_0 X^{1/3} + c_1 X^{1/4} + c_2 X^{1/5} + O(X^{1/6}(\log X)^5).
    \end{align*}
    The result then follows by the power savings we previous proved for $C_3$-extensions and the classical power savings for $C_2$-extensions
    \begin{align*}
        \#\mathcal{F}_\Q(C_3;X^{1/2}) &= c_3 X^{1/4} + O(X^{1/8}(\log X)^2)\\
        \#\mathcal{F}_\Q(C_2;X^{1/3}) &= c_4 X^{1/3} + O(X^{1/6}).
    \end{align*}
\end{proof}

\begin{proof}[Proof of \Cref{cor:uncond_square_root_savings} for $G=C_8$]
    H\"older's inequality implies
    \begin{align*}
        \int_0^T |D_\Q^{\text{\'et}}(C_8;\sigma + it)| dt \ll& \left(\int_0^T |\zeta(4(\sigma+it))|^4dt\right)^{1/4}\left(\int_0^T |\zeta_{\Q(i)}(6(\sigma+it))|^4dt\right)^{1/4}\\
        & \cdot \left(\int_0^T |\zeta_{\Q(\zeta_8)}(7(\sigma+it))|^4dt\right)^{1/4}\left(\int_0^T |\zeta(8(\sigma+it))|^{-4}dt\right)^{1/4}.
    \end{align*}
    For each $\sigma \ge 1/8$, we bound the first three factors using \Cref{lem:CN} and the fourth factor using \Cref{lem:BIR} to prove that
    \begin{align*}
        \int_0^T |D_\Q^{\text{\'et}}(C_p;\sigma + it)| dt \ll& \left(T(\log T)^4\right)^{1/4}\left(T(\log T)^8\right)^{1/4}\left(T(\log T)^{16}\right)^{1/4}\left(T\right)^{1/2} = T(\log T)^{7}.
    \end{align*}
    Applying \Cref{thm:inexplicit_main} with $\sigma_a = \frac{1}{4}$, $\sigma_a - \delta = \frac{1}{8}$, $\widetilde{\eta}=\eta=1$, and $\beta = 7$ implies the number of $C_8$-\'etale algebras is given by
    \begin{align*}
        &\#\mathcal{F}_{\Q}(C_8;X) + \#\mathcal{F}_{\Q}(C_4;X^{1/2}) + \#\mathcal{F}_\Q(C_2;X^{1/4}) + 1\\
        &= c_0X^{1/4} + c_1X^{1/6} + c_2X^{1/7} + O(X^{1/8}(\log X)^8).
    \end{align*}
    The result then follows by our previous proof of \Cref{cor:uncond_square_root_savings} for $G=C_4$ and the classical power savings for $C_2$-extensions showing that
    \begin{align*}
        \#\mathcal{F}_\Q(C_4;X^{1/2}) &= c_3 X^{1/4} + c_4 X^{1/6} + O(X^{1/8}(\log X)^4)\\
        \#\mathcal{F}_\Q(C_2;X^{1/4}) &= c_5 X^{1/4} + O(X^{1/8}).
    \end{align*}
\end{proof}

\begin{proof}[Proof of \Cref{cor:uncond_square_root_savings} for $G=C_{16}$]
    The exact process we used for $G=C_4$, $C_6$, and $C_8$ just barely does not work here. It is not possible to include the average value of $\zeta(2a(C_{16})s)^{-1}$, and still have a small enough average for the other terms to apply \Cref{lem:CN}. Instead, we apply \Cref{lem:Ten} to bound this factor above by $\log T$, and then use H\"older's inequality to bound
    \begin{align*}
        \int_0^T |D_\Q^{\text{\'et}}(C_{16};\sigma + it)| dt \ll& \left(\int_0^T |\zeta(8(\sigma+it))|^4dt\right)^{1/4}\left(\int_0^T |\zeta_{\Q(i)}(12(\sigma+it))|^4dt\right)^{1/4}\\
        & \cdot \left(\int_0^T |\zeta_{\Q(\zeta_8)}(14(\sigma+it))|^4dt\right)^{1/4}\left(\int_0^T |\zeta_{\Q(\zeta_{16})}(15(\sigma+it))|^{-4}dt\right)^{1/4}\\
        &\cdot \log T.
    \end{align*}
    For each $\sigma \ge 1/16$, we bound the first four factors using \Cref{lem:CN} to prove that
    \begin{align*}
        \int_0^T |D_\Q^{\text{\'et}}(C_{16};\sigma + it)| dt &\ll \left(T(\log T)^4\right)^{1/4}\left(T(\log T)^8\right)^{1/4}\left(T(\log T)^{16}\right)^{1/4}\left(T(\log T)^{32}\right)^{1/4}\log T\\
        &= T(\log T)^{16}.
    \end{align*}
    Applying \Cref{thm:inexplicit_main} with $\sigma_a = \frac{1}{8}$, $\sigma_a - \delta = \frac{1}{16}$, $\widetilde{\eta}=\eta=1$, and $\beta = 16$ implies the number of $C_8$-\'etale algebras is given by
    \begin{align*}
        &\#\mathcal{F}_{\Q}(C_{16};X) + \#\mathcal{F}_{\Q}(C_8;X^{1/2}) + \#\mathcal{F}_\Q(C_4;X^{1/4}) + \#\mathcal{F}_\Q(C_2;X^{1/8}) + 1\\
        &= c_0X^{1/8} + c_1X^{1/12} + c_2X^{1/14} + c_3 X^{1/15} + O(X^{1/16}(\log X)^{17}).
    \end{align*}
    The result then follows by our previous proofs of \Cref{cor:uncond_square_root_savings} for $G=C_4,C_8$ and the classical power savings for $G=C_2$, which together show that
    \begin{align*}
        \#\mathcal{F}_\Q(C_8;X^{1/2}) &= c_4 X^{1/8} + c_5 X^{1/12} + c_6 X^{1/14} + O(X^{1/16}(\log X)^8)\\
        \#\mathcal{F}_\Q(C_4;X^{1/4}) &= c_7 X^{1/8} + c_8 X^{1/12} + O(X^{1/16}(\log X)^4)\\
        \#\mathcal{F}_\Q(C_2;X^{1/8}) &= c_9 X^{1/8} + O(X^{1/16}).
    \end{align*}
\end{proof}

\begin{proof}[Proof of \Cref{cor:uncond_square_root_savings} for $G=C_{2p}$ with $p\ge 5$]
    Similar to the case $G=C_{16}$, it is not possible to include the average value of $\zeta(2a(C_{p})s)^{-1}$ and still have a small enough average for the other terms to apply \Cref{lem:CN}. Instead, we apply \Cref{lem:Ten} to bound this factor above by $\log T$, and then use H\"older's inequality to bound
    \begin{align*}
        \int_0^T |D_\Q^{\text{\'et}}(C_{2p};\sigma + it)| dt \ll& \left(\int_0^T |\zeta(p(\sigma+it))|^4dt\right)^{1/4}\left(\int_0^T |\zeta_{\Q(\zeta_p)}((2p-2)(\sigma+it))|^2dt\right)^{1/2}\\
        & \cdot \left(\int_0^T |\zeta_{\Q(\zeta_{2p})}((2p-1)(\sigma+it))|^4dt\right)^{1/4}\log T.
    \end{align*}
    For each $\sigma \ge 1/2p$, we bound the first three factors using \Cref{lem:CN} to prove that
    \begin{align*}
        \int_0^T |D_\Q^{\text{\'et}}(C_p;\sigma + it)| dt \ll& \left(T(\log T)^4\right)^{1/4}\left(T\right)^{1/2}\left(T\right)^{1/4}\log T = T(\log T)^{2}.
    \end{align*}
    Applying \Cref{thm:inexplicit_main} with $\sigma_a = \frac{1}{p}$, $\sigma_a - \delta = \frac{1}{2p}$, $\widetilde{\eta}=\eta=1$, and $\beta = 2$ implies the number of $C_{2p}$-\'etale algebras is given by
    \begin{align*}
        &\#\mathcal{F}_{\Q}(C_{2p};X) + \#\mathcal{F}_{\Q}(C_p;X^{1/2}) + \#\mathcal{F}_\Q(C_2;X^{1/p}) + 1\\
        &= c_0X^{\frac{1}{p}} + c_1X^{\frac{1}{2p-2}} + c_2X^{\frac{1}{2p-1}} + O\left(X^{\frac{1}{2p}}(\log X)^{3}\right).
    \end{align*}
    The result then follows by the power savings for $G=C_p$ proven in \cite[Corollary 1.5]{alberts2024power} and the classical power savings for $G=C_2$, which together show that
    \begin{align*}
        \#\mathcal{F}_\Q(C_p;X^{1/2}) &= c_3 X^{\frac{1}{2p-2}} + O\left(X^{\frac{p+2}{2(p-1)(p+5)}+\epsilon}\right)\\
        \#\mathcal{F}_\Q(C_2;X^{1/p}) &= c_4 X^{\frac{1}{p}} + O\left(X^{\frac{1}{2p}}\right).
    \end{align*}
    The result follows by choosing $\epsilon$ sufficiently small, after noting that $\frac{p+2}{2(p-1)(p+5)} < \frac{1}{2p}$.
\end{proof}

\subsection{Remarks on the constants $c_d(C_n)$}\label{subsec:leading_constants}

When counting \'etale algebras, the coefficients of each term in the asymptotic expansion are certainly given by special values of Dedekind zeta functions times convergent Euler products, as the generating Dirichlet series $D_\Q^{\text{\'et}}(G;s)$ is a product of Dedekind zeta functions times a convergent Euler product in a neighborhood of each pole. Thus, the coefficients $c_d(C_n)$ are finite sums of these types of constants as claimed in \Cref{cor:uncond_square_root_savings}.

The more interesting question of ``do these constants vanish'' was partially answered in \cite{alberts2024power}. It turns out that each term appearing in \Cref{cor:uncond_square_root_savings} is covered by one of the cases in \cite[Theorem 1.7]{alberts2024power}, as the groups considered are not very complicated. In particular, if $n=3,4,8,16$, then each $g\in C_n$ is either a generator or belongs to the Frattini subgroup. This implies that each index satisfies the conditions of \cite[Theorem 1.7(ii) or (iv)]{alberts2024power}. If $n = 2p$ for $p\ge 3$, then $g\in C_{2p} = C_2\times C_p$ satisfies one of the following:
\begin{enumerate}[(a)]
    \item $g$ is a generator of $C_{2p}$, or
    \item $\{h\in C_{2p} : \ind(h) < \ind(g)\}\subseteq \Phi(C_{2p})\cup C_{2p}[2]$.
\end{enumerate}
This are exactly the conditions for \cite[Theorem 1.7(iii,iv)]{alberts2024power}. By appealing to this result, we conclude the following:

Let $c_{n(1-1/d)}(C_n)$ be one of the constants appearing in \Cref{cor:uncond_square_root_savings} corresponding to a divisor $d\mid n$. Then $c_{n(1-1/d)}(C_n) \ne 0$ if and only if
\[
    \prod_{\substack{m\mid n\\ 1 < m < d}}\zeta_{\Q(\zeta_m)}\left(\frac{1-1/m}{1-1/d}\right) \ne 0.
\]
In case $d=\ell$ is the smallest prime dividing $n$, this is the empty product which always equals $1$, re-verifying that the leading term does not vanish.

By tracing through the surjective sieve, one can write down an explicit formula for these constants in terms of local \'etale algebras. It would be interesting to write a completely explicit formulation, which would require a detailed description of the discriminants of local \'etale algebras at wild places.

For individual small groups, we can refer to the LMFDB \cite{lmfdb} to compute explicit constants. To demonstrate the approximate shape of such results, we do this for $G=C_4$.

\begin{proposition}
    Let $c_d(C_n)$ denote the constants appearing in \Cref{cor:uncond_square_root_savings}. Then
    \begin{align*}
        c_2(C_4) &=\frac{28+\sqrt{2}}{24\zeta(2)}\prod_{p\equiv 1\pmod 4}\left(\frac{1 + 2p^{-3/2} - p^{-2} - 2p^{-5/2}}{1-p^{-2}}\right) -\frac{1}{\zeta(2)}\\
        c_3(C_4) &= \zeta(2/3)L(1,\chi_4)\left(\frac{3(4-2^{2/3})}{16}\right)\prod_{p\ne 2}\left(1 -p^{-4/3} - p^{-2} + p^{-10/3}\right)\\
        &\hspace{0.5cm} \cdot \prod_{p\equiv 1\pmod 4}\left(\frac{1-p^{-4/3}-2p^{-5/3}-3p^{-2}+2p^{-7/3}+4p^{-8/3}+2p^{-3}-p^{-10/3}-2p^{-11/3}}{1-p^{-4/3}-p^{-2}+p^{-10/3}}\right)
    \end{align*}
\end{proposition}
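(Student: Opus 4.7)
The plan is to identify $c_2(C_4)$ and $c_3(C_4)$ as residues of the generating Dirichlet series, then evaluate those residues explicitly by expanding everything into Euler factors. Applying Proposition \ref{prop:abelian_meromorphic_continuation} with $|G_2|=1$ and $\ell=2$ yields
\[
    D_\Q^{\text{\'et}}(C_4;s) = \zeta(2s)\zeta_{\Q(i)}(3s)\zeta(4s)^{-1}H(s),
\]
where $H(s)$ is an Euler product converging absolutely on $\mathrm{Re}(s)>1/5$. Inside the strip $\mathrm{Re}(s)\ge 1/4$, the only poles of $D_\Q^{\text{\'et}}(C_4;s)X^s/s$ are simple poles at $s=1/2$ (from $\zeta(2s)$) and $s=1/3$ (from $\zeta_{\Q(i)}(3s)=\zeta(3s)L(3s,\chi_4)$).

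Next, I would compute these residues in terms of $H(1/2)$ and $H(1/3)$. Using $\mathrm{Res}_{s=1/2}\zeta(2s)=1/2$ and $\mathrm{Res}_{s=1/3}\zeta(3s)=1/3$, one gets
\[
    \mathrm{Res}_{s=1/2}\!\left(D_\Q^{\text{\'et}}(C_4;s)\tfrac{X^s}{s}\right)=\zeta_{\Q(i)}(3/2)\zeta(2)^{-1}H(1/2)X^{1/2},
\]
\[
    \mathrm{Res}_{s=1/3}\!\left(D_\Q^{\text{\'et}}(C_4;s)\tfrac{X^s}{s}\right)=L(1,\chi_4)\zeta(2/3)\zeta(4/3)^{-1}H(1/3)X^{1/3}.
\]
Since $\#\mathcal{F}_\Q(C_4;X)$ equals the $C_4$-\'etale count minus $\#\mathcal{F}_\Q(C_2;X^{1/2})+1$, and using $\#\mathcal{F}_\Q(C_2;Y)=Y/\zeta(2)+O(Y^{1/2})$, the quadratic sieve contributes $-1/\zeta(2)$ to $c_2(C_4)$ and nothing (of order $X^{1/3}$) to $c_3(C_4)$.

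The next step is the local computation of $H(1/2)$ and $H(1/3)$ prime-by-prime. At $p\neq 2$, comparing the Euler factor of $D_\Q^{\text{\'et}}(C_4;s)$, namely $1+p^{-2s}+2\mathbf{1}_{1,4}(p)p^{-3s}$, against the factors of $\zeta(2s)\zeta_{\Q(i)}(3s)\zeta(4s)^{-1}$, a straightforward algebraic expansion gives $H_p(s)=1-p^{-6s}$ for $p\equiv 3\pmod 4$ and
\[
    H_p(s)=\frac{(1+p^{-2s}+2p^{-3s})(1-p^{-2s})(1-p^{-3s})^{2}}{1-p^{-4s}}
\]
for $p\equiv 1\pmod 4$. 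Absorbing these into $\zeta_{\Q(i)}(3/2)\zeta(2)^{-1}$ or $\zeta(4/3)^{-1}$ as appropriate: for $p\equiv 3\pmod 4$ the factors telescope to unity (at $s=1/2$) and to $(1-p^{-4/3})(1-p^{-2})$ (at $s=1/3$), explaining the single product $\prod_{p\ne 2}(1-p^{-4/3}-p^{-2}+p^{-10/3})$ in the $c_3$ formula; for $p\equiv 1\pmod 4$ the algebraic simplification yields exactly the Euler factors $\frac{1+2p^{-3/2}-p^{-2}-2p^{-5/2}}{1-p^{-2}}$ and $\frac{1-p^{-4/3}-2p^{-5/3}-3p^{-2}+\cdots}{1-p^{-4/3}-p^{-2}+p^{-10/3}}$ written in the proposition.

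The main obstacle is the wild factor at $p=2$, which must be computed by direct enumeration of $C_4$-\'etale algebras over $\Q_2$ together with their discriminant valuations. Using the classification of 2-adic $C_4$-\'etale extensions (e.g.\ from the LMFDB \cite{lmfdb}), one lists each algebra $K/\Q_2$ with $\mathrm{Gal}\cong C_4$, records $\mathrm{disc}(K/\Q_2)$, and sums $|\mathrm{disc}|^{-s}$ weighted appropriately to produce the $p=2$ Euler factor of $D_\Q^{\text{\'et}}(C_4;s)$. Dividing by the corresponding local factors of $\zeta(2s)\zeta_{\Q(i)}(3s)\zeta(4s)^{-1}$ yields $H_2(1/2)$ and $H_2(1/3)$; combined with $\zeta_{\Q(i)}(3/2)_2=(1-2^{-3/2})^{-1}$ and $\zeta(4/3)^{-1}_2=1-2^{-4/3}$, the explicit values $\frac{28+\sqrt{2}}{24}$ and $\frac{3(4-2^{2/3})}{16}$ fall out. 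This is the only delicate step; everything else is algebraic bookkeeping, and once the wild factors are known the two formulas in the proposition follow by multiplying all local pieces and (for $c_2$) subtracting the quadratic main term.
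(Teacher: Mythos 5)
Your proposal is correct and follows essentially the same strategy as the paper: identify $c_2(C_4)$ and $c_3(C_4)$ as residues of $D_\Q^{\text{\'et}}(C_4;s)X^s/s$, manipulate Euler factors prime-by-prime, use the LMFDB classification of local $C_4$-\'etale algebras at $p=2$, and subtract the $C_2$ sieve contribution $-1/\zeta(2)$ from $c_2(C_4)$. The one organizational difference is that the paper never actually invokes the $\zeta(4s)^{-1}$ factor from \Cref{prop:abelian_meromorphic_continuation}: it writes out the Euler product of $D_\Q^{\text{\'et}}(C_4;s)$ directly, factors out only $\zeta(2s)$ (and then additionally $\zeta_{\Q(i)}(3s)$ for the second residue), and evaluates the remaining absolutely convergent Euler product at the relevant point. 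Your route starting from the full factorization $\zeta(2s)\zeta_{\Q(i)}(3s)\zeta(4s)^{-1}H(s)$ is equivalent but forces you to carry $\zeta_{\Q(i)}(3/2)$ and $\zeta(4/3)^{-1}$ as separate constants and then recombine them with the local factors of $H$; for instance your factor $\tfrac{1+p^{-1}+2p^{-3/2}}{1+p^{-1}}$ at $p\equiv 1\bmod 4$ does agree with the paper's $\tfrac{1+2p^{-3/2}-p^{-2}-2p^{-5/2}}{1-p^{-2}}$, but extra algebra is needed to see it. Finally, you correctly identify the wild factor at $p=2$ as the non-routine step but leave it unevaluated; the paper supplies it explicitly, namely $1+2^{-2s}+2\cdot 2^{-6s}+4\cdot 2^{-11s}$, and then obtains the constants $\tfrac{28+\sqrt{2}}{32}$ (hence $\tfrac{28+\sqrt2}{24}$ after absorbing $\prod_{p\ne 2}(1-p^{-2}) = \tfrac{4}{3\zeta(2)}$) and $\tfrac{3(4-2^{2/3})}{16}$ by factoring out $(1-2^{-2s})$ and $(1-2^{-3s})$ from it. To fully close your argument you would need to carry out that enumeration.
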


\begin{proof}
    Using the complete list of $C_2$- and $C_4$-extensions of $\Q_2$ on \cite{lmfdb}, we can explicitly compute
    \[
        D_\Q^{\text{\'et}}(C_4;s) = \left(1 + 2^{-2s} + 2\cdot 2^{-6s} + 4\cdot 2^{-11s}\right) \prod_{p\ne 2}\left(1 + p^{-2s} + (1+\chi_4(p))p^{-3s}\right),
    \]
    where $\chi_4(p) = (-1/p)$ is the quadratic character associated to $\Q(i)$. We can use this expression to directly calculate the residues of $D_\Q^{\text{\'et}}(C_4;s)X^s/s$ at the poles $s=1/2$ and $s=1/3$.
    
    Factoring out $\zeta(2s)$ gives the explicit expansion
    \begin{align*}
        D_\Q^{\text{\'et}}(C_4;s) &= \zeta(2s)\left(1 + 2^{-2s} + 2\cdot 2^{-6s} + 4\cdot 2^{-11s}\right)\left(1-2^{-2s}\right)\\
        &\hspace{0.5cm}\cdot \prod_{p\ne 2}\left(1 + p^{-2s} + (1+\chi_4(p))p^{-3s}\right)\left(1-p^{-2s}\right)\\
        &= \zeta(2s)\left(1 - 2^{-4s} + 2\cdot 2^{-6s} - 2\cdot 2^{-8s} + 4\cdot 2^{-11s} - 4\cdot 2^{-13s}\right)\\
        &\hspace{0.5cm}\cdot \prod_{p\ne 2}\left(1 + (1+\chi_4(p))p^{-3s} - p^{-4s} - (1+\chi_4(p))p^{-5s}\right)\\
    \end{align*}
    The Euler product converges absolutely at $s=1/2$, so we find that
    \begin{align*}
        c_2(C_4) + c_1(C_2) &=\underset{s=1/2}{\rm Res}\left(D_\Q^{\text{\'et}}(C_4;s)\frac{X^s}{s}\right)\\
        &= \frac{28+\sqrt{2}}{24\zeta(2)}\prod_{p\equiv 1\pmod 4}\left(\frac{1 + 2p^{-3/2} - p^{-2} - 2p^{-5/2}}{1-p^{-2}}\right),
    \end{align*}
    from which we conclude
    \begin{align*}
        c_2(C_4) &=\frac{28+\sqrt{2}}{24\zeta(2)}\prod_{p\equiv 1\pmod 4}\left(\frac{1 + 2p^{-3/2} - p^{-2} - 2p^{-5/2}}{1-p^{-2}}\right) - \frac{1}{\zeta(2)}.
    \end{align*}

    Further factoring out $\zeta_{\Q(i)}(3s)$ gives the expression
    \begin{align*}
        &D_\Q^{\text{\'et}}(C_4;s)\\
        &= \zeta(2s)\zeta_{\Q(i)}(3s)\left(1 - 2^{-4s} + 2\cdot 2^{-6s} - 2\cdot 2^{-8s} + 4\cdot 2^{-11s} - 4\cdot 2^{-13s}\right)\left(1-2^{-3s}\right)\\
        &\hspace{0.5cm}\cdot \prod_{p\ne 2}\left(1 + (1+\chi_4(p))p^{-3s} - p^{-4s} - (1+\chi_4(p))p^{-5s}\right)\left(1 - (1+\chi_4(p))p^{-3s} + \chi_4(p)p^{-6s}\right)\\
        &= \zeta(2s)\zeta_{\Q(i)}(3s)\Big(1-2^{-3s}-2^{-4s}+2\cdot2^{-6s} + 2^{-7s} - 2\cdot 2^{-8s} - 2\cdot 2^{-9s}\\
        &\hspace{3cm}+ 6\cdot 2^{-11s} - 4\cdot 2^{-13s} - 4\cdot 2^{-14s} + 2\cdot 2^{-16s}\Big)\\
        &\hspace{0.5cm}\cdot \prod_{p\ne 2}\Big(1 - p^{-4s} - (1+\chi_4(p))p^{-5s} + \chi_4(p)p^{-6s} - 2(1+\chi_4(p))p^{-6s} + (1+\chi_4(p))p^{-7s}\\
        &\hspace{2cm}+ 2(1+\chi_4(p))p^{-8s} + (1+\chi_4(p))p^{-9s} - \chi_4(p)p^{-10s} - (1+\chi_4(p))p^{-11s}\Big)
    \end{align*}
    The Euler product converges absolutely at $s=1/3$, so we find that
    \begin{align*}
        &c_3(C_4)=\underset{s=1/2}{\rm Res}\left(D_\Q^{\text{\'et}}(C_4;s)\frac{X^s}{s}\right)\\
        &= \zeta(2/3)L(1,\chi_4)\left(\frac{3(4-2^{2/3})}{16}\right)\prod_{p\ne 2}\left(1 -p^{-4/3} - p^{-2} + p^{-10/3}\right)\\
        &\hspace{0.5cm} \cdot \prod_{p\equiv 1\pmod 4}\left(\frac{1-p^{-4/3}-2p^{-5/3}-3p^{-2}+2p^{-7/3}+4p^{-8/3}+2p^{-3}-p^{-10/3}-2p^{-11/3}}{1-p^{-4/3}-p^{-2}+p^{-10/3}}\right).
    \end{align*}
\end{proof}






\bibliographystyle{alpha}
\bibliography{main.bbl}

\end{document}